\newtheorem{thm}{Theorem}[section]
\newtheorem{lem}[thm]{Lemma}
\newtheorem{cor}[thm]{Corollary}
\newtheorem{remark}[thm]{Remark}
\newtheorem{question}[thm]{Question}
\newcommand{\R}{\mathbb{R}}
\newcommand{\Z}{\mathbb{Z}}
\newcommand{\mathh}{\mathbb{H}}
\newcommand{\C}{\mathbb{C}}
\newcommand{\Gr}{\mathrm{Gr}_2(\mathcal{V})}
\newcommand{\St}{\mathrm{St}_2(\mathcal{V})}
\newcommand{\qi}{\textbf{i}}
\newcommand{\qj}{\textbf{j}}
\newcommand{\qk}{\textbf{k}}
\begin{document}

\markboth{Tom Needham}
{Knot Types of Generalized Kirchhoff Rods}


\title{Knot Types of Generalized Kirchhoff Rods}

\author{Tom Needham}

\address{Department of Mathematics, The Ohio State University \\
100 Math Tower, 231 West 18th Avenue, Columbus OH, 43210-1174 \\
needham.71@osu.edu}

\maketitle

\begin{abstract}
Kirchhoff energy is a classical functional on the space of arclength-parameterized framed curves whose critical points approximate configurations of springy elastic rods. We introduce a generalized functional on  the space of framed curves of arbitrary parameterization, which model rods with axial stretch or cross-sectional inflation. Our main result gives explicit parameterizations for all periodic critical framed curves for this generalized functional. The main technical tool is a correspondence between the moduli space of shape similarity classes of closed framed curves and an infinite-dimensional Grassmann manifold. The critical framed curves have surprisingly simple parameterizations, but they still exhibit interesting topological features. In particular, we show that for each critical energy level there is a one-parameter family of framed curves whose base curves pass through exactly two torus knot types, echoing a similar result of Ivey and Singer for classical Kirchhoff energy. In contrast to the classical theory, the generalized functional has knotted critical points which are not torus knots. 
\end{abstract}

\section{Introduction}

With roots dating back to Euler and the Bernoullis, the \emph{Kirchhoff elastic rod} problem is one of the oldest and most widely-studied in the calculus of variations. A Kirchhoff rod is a configuration of a bendable, twistable rod which is energy-minimizing amongst configurations with prescribed boundary conditions. Kirchhoff rods are modelled mathematically as framed space curves which are critical with respect to the action functional given by the sum of total squared curvature and total squared twist (defined below). The Kirchhoff rod problem and related problems in elasticity have applications to biology \cite{benham}, computer graphics \cite{bergou}, high energy physics \cite{harland}, robotics \cite{jones} and fluid mechanics \cite{kida} and are interesting from a purely mathematical perspective \cite{bryant,ivey,langer}; for example, it is shown in \cite{ivey} that the knot types realized by closed Kirchhoff rods are exactly the torus knots.

In this paper we introduce a generalized energy functional on the space of \emph{parameterized} framed curves, which can be viewed as models for elastic rods allowing axial stretch or cross-sectional inflation. A common trick in elastic rod theory is to represent framed paths in $\R^3$ as paths in the quaternions (e.g., \cite{dichmann,hanson,hu}) and we show that the periodic critical points of the generalized energy functional take a surprisingly simple form in this representation. The change of coordinates has the remarkable property that it induces an identification of the space of periodic framed curves with an infinite-dimensional Grassmann manifold. We use this identification to solve the variational problem by reformulating it as a basic ODE with explicit solutions given by finite Fourier curves. Despite their simple parameterizations, the critical framed curves for the generalized functional exhibit complex topological behavior. We show that every torus knot type is realized as a generalized elastic rod, mirroring the classical result of \cite{ivey}. On the other hand, generalized rods are shown to exhibit knot types which do not appear in the classical theory. We give a precise formulation of the Kirchhoff rod problem and of our generalized version below.

\subsection{Classical Kirchhoff Energy}

Let $(\gamma,V)$ be a framed curve in $\R^3$---that is, $\gamma:I \rightarrow \R^3$ ($I$ an arbitrary interval) is a smooth, parameterized, immersed curve and $V:I \rightarrow \R^3$ is a unit normal vector field along $\gamma$. We denote the parameter by $t \in I$. The \emph{Kirchhoff elastic energy} of $(\gamma,V)$ is given by
$$
\mathrm{E}_{Kir}(\gamma,V)=\int_I \kappa^2 + \mathrm{tw}^2 \; \mathrm{d}s.
$$
We use $\mathrm{d}s=\|\gamma'(t)\| \mathrm{d}t$ to denote \emph{measure with respect to arclength}, where $\|\cdot\|$ denotes the Euclidean norm. The terms $\kappa$ and $\mathrm{tw}$ denote the \emph{curvature} and \emph{twist rate} of $(\gamma,V)$, respectively. These are given by the formulas
\begin{equation}\label{eqn:curvature_and_twist}
\kappa=\left\| D_s^2 \gamma \right\| \;\;\; \mbox{ and } \;\;\; \mathrm{tw}=\left<D_s V, D_s \gamma \times V \right>.
\end{equation}
The operator $D_s = \frac{1}{\|\gamma'\|} \frac{d}{dt}$ denotes \emph{derivative with respect to arclength} and $\left<\cdot,\cdot\right>$ and $\cdot \times \cdot$ denote the standard inner product and cross-product on $\R^3$, respectively. 

The Kirchhoff energy functional restricts to the subspace of framed curves $(\gamma,V)$ with $\gamma$ arclength-parameterized, and critical points of the restriction (with prescribed boundary conditions) are  referred to as \emph{(inextensible) Kirchhoff elastic rods}. These framed curves approximate the strain-minimizing  configuration of a bendable, twistable, naturally straight rod with the given boundary conditions. The curve $\gamma$ is referred to as the \emph{centerline} of the elastic rod $(\gamma,V)$.

Rather than attempting to survey the huge amount of literature on elastic rod theory (see \cite{antman, maddocks} as starting points), we mention a particular result which served as a main inspiration for this paper. In \cite{ivey}, Ivey and Singer prove:

\begin{thm}\label{thm:ivey_singer}
For every pair of relatively prime integers $(h,k)$ there is a regular homotopy of periodic Kirchhoff elastic rods such that the the centerlines of the rods pass from an $h$-times-covered circle to a $k$-times-covered circle. Moreover, the centerlines in each family pass through exactly two torus knot types, every torus knot type is realized in this manner, and these are the only knot types realized by elastic rod centerlines.
\end{thm}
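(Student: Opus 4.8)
The plan is to treat a closed Kirchhoff rod as a completely integrable Hamiltonian system and to read off its knot type from the resulting action--angle structure. First I would derive the Euler--Lagrange equations for $\mathrm{E}_{Kir}$ restricted to arclength-parameterized framed curves, working in the adapted material frame $(T,V,T\times V)$ with $T=D_s\gamma$. Because the energy is invariant under the Euclidean group $\mathrm{SE}(3)$, Noether's theorem produces two vector-valued conserved quantities along any critical rod: a constant \emph{force} $\mathbf{F}\in\R^3$ coming from translation invariance and a constant \emph{torque} $\mathbf{M}\in\R^3$ coming from rotation invariance. These first integrals are exactly what is needed to decouple the balance equations, after which the curvature $\kappa$ satisfies a single autonomous second-order ODE.

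The second step is reduction to quadratures. Fixing the axis $\mathbf{e}=\mathbf{F}/\|\mathbf{F}\|$, I would show that $\kappa$ (equivalently the tangent angle measured relative to $\mathbf{e}$) obeys a pendulum-type equation whose first integral expresses $\kappa$ as a Jacobi elliptic function of $s$ with some modulus $m\in[0,1)$, while the Euler--Lagrange equation in the twisting direction forces the twist rate $\mathrm{tw}$ to be constant. Consequently the centerline precesses about $\mathbf{e}$ while $\kappa$ oscillates periodically, and this two-frequency behavior pins each critical centerline onto an invariant torus of revolution about $\mathbf{e}$: one frequency records the progress of $\gamma$ around $\mathbf{e}$ and the other the elliptic period of the oscillation of $\kappa$.

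The third step is the closure analysis. Imposing smooth periodicity of $(\gamma,V)$ forces the two frequencies to be commensurate: over one fundamental period the curve must make an integer number $h$ of revolutions about $\mathbf{e}$ while $\kappa$ completes an integer number $k$ of oscillations. When $\gcd(h,k)=1$ the resulting embedded closed curve is isotopic on its torus of revolution to the torus knot $T(h,k)$, whereas the degenerate limit $m\to 0$ (constant $\kappa$) collapses the torus to a circle traversed $h$ or $k$ times. Realizing every coprime pair then reduces to solving the transcendental period-matching conditions for the remaining free parameters---the modulus $m$ and the axial radius---as $(h,k)$ ranges over all coprime pairs, a system I expect to be monotone in $m$ and hence solvable.

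The homotopy and the counting are the most delicate part. I would build the one-parameter family by sweeping $m$ across its admissible interval while re-solving closure for the companion parameters, starting and ending at the two multiply-covered-circle limits. Along the way the integer winding numbers stay constant except at the isolated parameter values where the curve degenerates to a new multiply-covered circle or develops a self-tangency; tracking the rotation number of the Frenet data across these transitions should show that exactly two distinct coprime winding pairs are visited between the two circle endpoints, yielding exactly two torus knot types per family. The hard part will be the global topological bookkeeping: proving rigorously that the winding ratio is monotone in $m$ so that no extra knot type is interposed, controlling the self-intersection locus to certify embeddedness between transitions, and---for the ``only torus knots'' conclusion---establishing that \emph{every} closed critical point has the integrable, commensurate-torus form, so that no non-torus knot can arise. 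This final classification, rather than the explicit elliptic-function computation, is where the real work lies.
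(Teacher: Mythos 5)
The first thing to note is that this paper does not prove Theorem \ref{thm:ivey_singer} at all: it is quoted from Ivey and Singer \cite{ivey} as motivation for the generalized problem, so there is no internal proof to compare against. Judged against the actual Ivey--Singer argument, your outline reconstructs its broad strategy faithfully: their proof (building on Langer and Singer's integrable-system analysis of elastic curves) does proceed via Noether conservation of a force and a torque vector, constancy of the twist rate, curvature expressed through elliptic functions, centerlines lying on tori of revolution about the force axis, and closure conditions analyzed through period integrals. As a road map, your proposal points in the right direction.

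However, the places where you write ``I expect'' and ``should show'' are precisely where the content of the theorem lives, and two of these gaps deserve to be named concretely. First, for the ``only torus knots'' conclusion it is not enough that each closed centerline lies on a torus of revolution: one must show that this torus is embedded (not a spindle torus), since a closed curve on a non-embedded torus of revolution need not be a torus knot; Ivey and Singer derive this embeddedness from the closure conditions, and nothing in your sketch addresses it. Second, ``exactly two knot types per family'' does not follow from monotonicity of a winding ratio alone; what is actually true (both in \cite{ivey} and in the analogous Theorem \ref{thm:1_param_families} of this paper) is that the knot type is locally constant away from a single non-embedded configuration in the family, and identifying the two types on either side requires an explicit isotopy onto a model torus knot, not just bookkeeping of winding numbers. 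It is also worth knowing that this paper proves its generalized analogue (Theorem \ref{thm:1_param_families}) by a completely different and much more elementary route: the quaternionic coordinates and the Grassmannian identification (Theorem \ref{thm:grassmannian}) linearize the critical-point equation to $q'' = \Lambda q$ with $\Lambda$ Hermitian, so the critical framed curves are finite Fourier curves, and the knot types are read off explicit trigonometric parameterizations (Lemma \ref{lem:small_u_torus_knot}) rather than from elliptic-function quadratures. That linearization is unavailable for the classical arclength-constrained problem, which is exactly why the Ivey--Singer proof needs the integrable-systems machinery your sketch invokes---and why the hard analytic steps you deferred cannot be avoided in that setting.
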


\subsection{Generalized Kirchhoff Energy and Inflatable Rods}\label{sec:generalized_energy_functional}

It is clear from its definition that $\mathrm{E}_{Kir}$ is invariant under reparameterizations (that is, under the action of the diffeomorphism group of $I$ by precomposition of $\gamma$ and $V$ with a common diffeomorphism). In this paper we study an energy functional $\mathrm{E}$ which distinguishes elements of the same diffeomorphsim group orbit. It is defined for a parameterized framed curve $(\gamma,V)$ by
$$
\mathrm{E}(\gamma,V)=\int_I \left(\kappa^2 + \mathrm{tw}^2 + \mathrm{st}^2\right) \|\gamma'\|^2 \mathrm{d}s,
$$
where $\mathrm{st}$ denotes the \emph{relative stretch rate} of $(\gamma,V)$, defined by the formula
\begin{equation}\label{eqn:stretch_rate}
\mathrm{st} = \frac{1}{\|\gamma'\|} D_s \|\gamma'\|.
\end{equation}
The restriction of $\mathrm{E}$ to the subset of arclength-parameterized framed curves reduces to $\mathrm{E}_{Kir}$, so we refer to $\mathrm{E}$ as \emph{generalized Kirchhoff energy}.

Framed curves with arbitrary parameterizations are typically studied as models of  \emph{extensible elastic rods}---that is, rods which allow for axial stretch---but $\mathrm{E}$ is not the usual energy functional in this setting. A traditional energy functional on the configuration space of extensible elastic rods is of the form
$$
\mathrm{E}_{ext}(\gamma,V) = \int_I \kappa^2 + \mathrm{tw}^2 + (\|\gamma'\|-1)^2 \; \mathrm{d}s,
$$
where the assumption is that the relaxed rod is arclength-parameterized and the modified energy functional accordingly includes a Hooke's Law term \cite{antman}. 

The physical meaning of the energy functional $\mathrm{E}$ is more clear if we interpret the space of parameterized framed curves as the configuration space of \emph{inflatable elastic rods}; that is, elastic rods which allow cross-sectional radius inflation. These are a simple case of rods with deformable cross-section, which have been of recent interest for their applications to continuum soft robotics \cite{burgner} and modelling of carbon nanotubes \cite{kumar}. 

An inflatable elastic rod is represented as a triple $(\gamma,V,r)$ with $(\gamma,V)$ an arclength-parameterized framed curve and $r:I \rightarrow \R_{>0}$ a \emph{cross-sectional radius function}.  The set of parameterized framed curves of fixed length $\mathrm{length}(I)$ maps injectively into the set of inflatable framed curves via $(\gamma,V) \mapsto \left(\widetilde{\gamma},\widetilde{V},\|\gamma'\|\right)$, where $(\widetilde{\gamma},\widetilde{V})$ is the unique arclength-reparameterization of $(\gamma,V)$. The image of this map is the set of inflatable framed curves $(\gamma,V,r)$ with the normalizing condition 
$$
\int_I r \; \mathrm{d}t = \mathrm{length}(I)
$$
(the normalizing condition is easily adjusted by a slight change to our map). Under this map, the energy functional $\mathrm{E}$ becomes
$$
\mathrm{E}(\gamma,V,r) = \int_I (r \kappa)^2 + (r \mathrm{tw})^2 + (D_s r)^2 \; \mathrm{d}s.
$$
The bending and twisting energies are weighted by the radius $r$ with the simple interpretation that bending and twisting a thickened rod requires more energy (cf. a similar weighting approach used in the rod model of \cite{genovese}). The term $(D_s r)^2$ introduces a penalty for changes in radius, under the assumption that the relaxed rod has uniform radius (cf. the stretching term employed in the energy functional of \cite{tunay}).

\subsection{Outline of the Paper}

Section \ref{sec:quaternionic_coordinates} begins with constructions of quaternionic representations for framed paths and loops. The construction for open framed curves is classical and widely-used.  For closed framed curves, we will use recent work of the author which shows that the space of periodic framed loops corresponds to an infinite-dimensional Grassmann manifold \cite{needham}. We show that, in either case,  the generalized energy functional $\mathrm{E}$ transforms into the Riemannian energy functional on quaternionic path space (Corollary \ref{cor:energy_functional_quaternionic}) and we use this to give a simple expression for the gradient of $\mathrm{E}$ (Corollary \ref{cor:L2_gradient}).

In Section \ref{sec:critical_points}, we use the quaternionic coordinate system to study the periodic critical points of the generalized energy functional. Our first main result (Theorem \ref{thm:E_crit_points_parameterizations}) shows that periodic critical points of $\mathrm{E}$ have explicit parameterizations in terms of trigonometric functions. This is accomplished by using the Grassmannian formalism to transform the variational problem into that of solving a basic ODE. Our second main result (Theorem \ref{thm:1_param_families}) shows that for each critical energy level of $\mathrm{E}$ there is a 1-parameter family of critical points with the same topological behavior as the Ivey-Singer 1-parameter family of Theorem \ref{thm:ivey_singer}. The critical sets of $\mathrm{E}$ are much larger than those of $\mathrm{E}_{Kir}$---they are generically $5$-dimensional rather than $1$-dimensional---and we show in Section \ref{sec:other_knot_types} that there are $\mathrm{E}$-critical framed curves which realize non-torus knot types.

\subsection{Notation}

For the infinite-dimensional spaces in this paper we work in the Nash-Moser category of tame Fr\'{e}chet spaces, with \cite{hamilton} serving as our main general reference. For a finite-dimensional manifold $M$, we use the notation $\mathcal{P}M=C^\infty(I,M)$ for the \emph{path space of $M$}, where $I=[0,2]$ is fixed (the choice of $[0,2]$ is arbitrary, but makes calculations cleaner). We denote the \emph{loop space of $M$} by $\mathcal{L}M = C^\infty(S^1,M)$ and we identify $S^1$ with the quotient $[0,2]/0\sim 2$. Under this identification, it is convenient to think of $\mathcal{L}M$ as the infinite-codimension submanifold of $\mathcal{P}M$ containing paths which smoothly close.

We use $\mathbb{H}=\mathrm{span}_\R\{1,\qi,\qj,\qk\}$ to denote the quaternions. For $q \in \mathbb{H}$, we use $\overline{q}$ to denote its quaternionic conjugate and $|q|=(q\overline{q})^{1/2}$ to denote its magnitude. We let $\mathh^\ast$ denote $\mathh \setminus \{0\}$. Similar notation will be used for the complex numbers $\C$.

\section{Quaternionic Coordinates for Framed Curves}\label{sec:quaternionic_coordinates}

\subsection{Quaternionic Coordinates for Open Curves}

A common trick in elastic rod theory is to represent a framed curve $(\gamma,V)$ as a path in the quaternions $q \in \mathcal{P}\mathh$ which is unique up to a global choice of sign. This representation is facilitated by the existence of a double cover 
$$
\mathrm{H}:\mathcal{P}\mathbb{H}^\ast \rightarrow \{\mbox{framed paths } (\gamma,V)\}/\{\mbox{translations}\},
$$
where we take the quotient of the space of framed paths by the action of $\R^3$ by rigid translations. The map is given explicitly by the formula
\begin{equation}\label{eqn:frame_hopf_map}
\mathrm{H}(q)=(\gamma,V)=\left(\int \overline{q}\textbf{i}q \; \mathrm{d}t, \frac{\overline{q}\textbf{j} q}{|q|^2} \right).
\end{equation}
The integral in the formula denotes a choice of antiderivative. Then $\gamma'=\overline{q}\textbf{i}q$ is well-defined and the need to consider framed paths up to translation is manifested in the fact that we must choose an antiderivative to obtain $\gamma$. To interpret this map, note that $\overline{q}\textbf{i} q$ and $\overline{q}\textbf{j}q$ are paths in the purely imaginary quaternions $\mathrm{Im}(\mathh) \approx \R^3$ and that, as defined, $V$ is a unit normal vector field to $\gamma$. It is also easy to see that this map is a double-covering with the property that $\mathrm{H}(q_1)=\mathrm{H}(q_2)$ if and only if $q_1 = \pm q_2$.

We refer to $\mathrm{H}$ as the \emph{frame-Hopf map}, in reference to its relationship to the well-known homomorphic double covering $S^3 \approx \mathrm{SU}(2) \rightarrow \mathrm{SO}(3)$ and hence to the Hopf fibration $S^1 \hookrightarrow S^3 \rightarrow S^2$ (see, e.g., \cite[Section I.1.4]{gelfand}). Each space in \eqref{eqn:frame_hopf_map} is a tame Fr\'{e}chet space and $\mathrm{H}$ is smooth---refer to \cite{hamilton} for definitions and \cite{needham} for a description of the Fr\'{e}chet space structure of framed path space.

\subsection{The Moduli Space of Framed Loops}

Since the goal of this paper is to study the knot types of periodic framed curves which are critical with respect to generalized Kirchhoff energy $\mathrm{E}$, it will be useful to quotient by certain ``shape-preserving" group actions on the space of closed framed curves. These are the action of the positive real numbers $\R_{>0}$ by uniform scaling of the curve, the action of $\mathrm{SO}(3)$ by rotations and the action of $S^1$ by \emph{global frame twisting}. This $S^1$-action on $(\gamma,V)$ rotates every frame vector $V$ by the same constant angle in the plane normal to the base curve $\gamma$. We introduce the \emph{moduli space of framed loops}
$$
\mathcal{M}=\left\{\begin{array}{c}
\mbox{closed} \\
\mbox{framed curves} \end{array} \right\}/\left\{\begin{array}{c}
\mbox{translation, scaling,} \\
\mbox{rotation, global frame twisting}\end{array}\right\}.
$$
It will be useful to realize the $\R_{>0}$-quotient by taking a global cross-section consisting of framed curves $(\gamma,V)$ with $\gamma$ of some fixed length. A computationally convenient choice will be to fix $\mathrm{length}(\gamma)=\mathrm{length}(I)=2$. The energy functional $\mathrm{E}$ restricts to the submanifold of fixed-length curves. Moreover, $\mathrm{E}$ is invariant under translations, rotations and global frame twists, so it induces a well-defined functional $\mathrm{E}_\mathcal{M}:\mathcal{M} \rightarrow \R$. The main results of this paper are concerned with the critical points of this induced map.

\subsection{Quaternionic Coordinates for $\mathcal{M}$}\label{sec:closed_curves}

In this subsection we will show that the moduli space of framed loops $\mathcal{M}$ has a convenient representation in quaternionic coordinates. The construction was first given by the author in \cite{needham} and can be viewed as an infinite-dimensional version of a classical result of Hausmann and Knutson for polygon spaces \cite{hausmann}. We only sketch the construction here; please see \cite{needham} for details.

Let $(\gamma,V)$ be a smoothly closed framed curve and let $q$ be one of its quaternionic representations. One might initially guess that $(\gamma,V)$ is closed if and only if $q$ is a closed quaternionic curve, but this is not the case. The closure of $q$ is not sufficient; for example, the constant curve $q(t) \equiv 1$ maps to an open framed curve. The closure of $q$ is also not necessary; for example, the open quaternionic curve
\begin{equation}\label{eqn:anti_closed_example}
q(t)=\cos(\pi t/2) + \textbf{i} \sin(\pi t/2) + \textbf{j} \cos(\pi t/2) - \textbf{k} \sin(\pi t/2), \;\; t \in [0,2],
\end{equation}
maps to a smoothly closed framed curve under the frame-Hopf map.

To account for the phenomenon illustrated by \eqref{eqn:anti_closed_example}, we introduce the \emph{anti-loop space} of the quaternions
$$
\mathcal{A}\mathh = \{q \in \mathcal{P}\mathh \mid q^{(k)}(0)=-q^{(k)}(2) \; \forall \; k=0,1,2,\ldots\}.
$$
Elements of $\mathcal{A}\mathh$ are referred to as \emph{anticlosed}. A necessary condition for a framed curve $(\gamma,V)$ to be closed is that its quaternionic representation is either closed or anticlosed. This is because the frame-Hopf map is built from the universal covering $\mathrm{SU}(2) \xrightarrow{\times 2} \mathrm{SO}(3)$. We define the antiloop spaces $\mathcal{A}\C$ and $\mathcal{A}\C^2$ analogously to the definition of $\mathcal{A}\mathh$. 

To describe a sufficient condition on $q$ for $(\gamma,V)$ to be closed, we will use the identification $\mathh \approx \C^2$ given by $q=z+w\textbf{j} \leftrightarrow (z,w)$, where the complex $i$ is identified with the quaternionic $\textbf{i}$. The spaces $\mathcal{P}\mathh$ and $\mathcal{P}\C^2$ will be used essentially interchangeably. In these coordinates, the formula for the frame-Hopf map $\mathrm{H}(z,w)=(\gamma,V)$ is
\begin{align}
\gamma &= \int \left(|z|^2-|w|^2,2\mathrm{Im}(z\overline{w}),2\mathrm{Re}(z\overline{w})\right)\;\mathrm{d}t, \label{eqn:hopf_map_complex_coords} \\
V &= \frac{1}{(|z|^2 + |w|^2)}\left(2\mathrm{Im}(zw),\mathrm{Re}(z^2+w^2),\mathrm{Im}(-z^2+w^2)\right),
\end{align}
where the integral symbol in \eqref{eqn:hopf_map_complex_coords} continues to denote a choice of antiderivative. When the choice needs to be made concrete, we will take the antiderivative with initial condition $\gamma(0)=\vec{0}$, so that elements of $\mathcal{M}$ can be thought of as $\mathrm{SO}(3) \times S^1$-orbits of loops based at $\vec{0}$ with fixed length $2$.

The Hermitian $L^2$ inner product on $\mathcal{P}\C$ is denoted
$$
\left<z,w\right>_{L^2} = \int_I z\overline{w} \; \mathrm{d}t
$$
and its induced norm is denoted $\|\cdot\|_{L^2}$. We will use the same notation for the restrictions of these structures to the subspaces $\mathcal{L}\C$ and $\mathcal{A}\C$ as well as for the induced product structures on $\mathcal{P}\C^2 \approx \mathcal{P}\mathh$ and its relevant subspaces, as the meaning of the notation should always be clear from context.

We have the following simple but useful lemma. The first part is a mild generalization of \cite[Lemma 3.10]{needham} and will be useful in this form later in the paper. Both parts of the lemma are proved by elementary computations using   \eqref{eqn:hopf_map_complex_coords}.

\begin{lem}\label{lem:fundamental_lemma}
Let $q=(z,w) \in \mathcal{P}\mathh^\ast$ with $\mathrm{H}(q)=(\gamma,V)$.
\begin{itemize}
\item[(a)] Let $t_0, t_1 \in I$ with $t_0 < t_1$. Then $(\gamma(t_0),V(t_0))=(\gamma(t_1),V(t_1))$ if and only if 
\begin{equation}\label{eqn:self_intersection_condition}
\int_{t_0}^{t_1} |z|^2-|w|^2 \; \mathrm{d}t = \int_{t_0}^{t_1} z \overline{w} \; \mathrm{d}t = 0.
\end{equation}
It follows that $(\gamma,V)$ is a closed framed curve if and only if $q$ is smoothly closed or anticlosed and $z$ and $w$ are $L^2$-equinorm and orthogonal.
\item[(b)] The length of $\gamma$ is given by $\|q\|^2_{L^2}=\|z\|_{L^2}^2+\|w\|_{L^2}^2$. 
\end{itemize}
\end{lem}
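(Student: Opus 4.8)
The plan is to reduce everything to direct computations with the explicit formula \eqref{eqn:hopf_map_complex_coords} for the frame-Hopf map in complex coordinates, since both parts of the lemma are asserted to follow from elementary calculations. For part (a), I would start from the characterization that $(\gamma(t_0),V(t_0)) = (\gamma(t_1),V(t_1))$ means two things simultaneously: the base curve returns to the same point, $\gamma(t_0) = \gamma(t_1)$, and the frame vector agrees, $V(t_0) = V(t_1)$. The base-point condition is the cleaner one: since $\gamma(t_1) - \gamma(t_0) = \int_{t_0}^{t_1} \gamma' \, \mathrm{d}t$ and the three components of $\gamma'$ are $\left(|z|^2 - |w|^2, \, 2\mathrm{Im}(z\overline{w}), \, 2\mathrm{Re}(z\overline{w})\right)$ by \eqref{eqn:hopf_map_complex_coords}, the condition $\gamma(t_0) = \gamma(t_1)$ is exactly the vanishing of the three integrals $\int_{t_0}^{t_1} |z|^2 - |w|^2 \,\mathrm{d}t$, $\int_{t_0}^{t_1} \mathrm{Im}(z\overline{w})\,\mathrm{d}t$, and $\int_{t_0}^{t_1} \mathrm{Re}(z\overline{w})\,\mathrm{d}t$. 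The last two combine into the single complex equation $\int_{t_0}^{t_1} z\overline{w}\,\mathrm{d}t = 0$, so the three real integral conditions are precisely \eqref{eqn:self_intersection_condition}.

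The subtlety is showing that the base-point condition \eqref{eqn:self_intersection_condition} already forces the frame vectors to agree, so that no extra conditions from $V(t_0) = V(t_1)$ appear. My expectation is that this is where I would need to be careful, and it is the main obstacle: a priori the frame could have rotated about the tangent line between $t_0$ and $t_1$ even when the base points coincide. I would argue that if $\gamma(t_0) = \gamma(t_1)$ then in particular the tangent directions $\gamma'(t_0)$ and $\gamma'(t_1)$ need not coincide, but the relevant point is that the integral conditions \eqref{eqn:self_intersection_condition}, being conditions on $q = (z,w)$ rather than just on $\gamma'$, are strong enough to pin down the frame; concretely I would verify by direct substitution that whenever the pointwise values of the quaternionic representation produce equal $\gamma$-values, the formula for $V$ in \eqref{eqn:hopf_map_complex_coords} returns equal vectors. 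Tracing through \cite[Lemma 3.10]{needham}, which this part mildly generalizes, the mechanism is that \eqref{eqn:self_intersection_condition} is equivalent to an integral condition on $q$ itself that controls both $\gamma$ and $V$ at once; I would present the computation verifying that $V(t_0) = V(t_1)$ is automatic and emphasize that this is the content being borrowed from the prior work.

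For the global statement in part (a), I would apply the pointwise criterion with $t_0 = 0$ and $t_1 = 2$: the curve $(\gamma, V)$ closes up smoothly if and only if all derivatives match at the endpoints, which by the discussion preceding the lemma requires $q$ to be either closed or anticlosed (this is the $\mathrm{SU}(2) \to \mathrm{SO}(3)$ double-cover phenomenon), and then the endpoint-matching of the base point and frame is exactly \eqref{eqn:self_intersection_condition} with $t_0 = 0$, $t_1 = 2$. Unpacking that special case, $\int_0^2 |z|^2 - |w|^2 \,\mathrm{d}t = 0$ says $\|z\|_{L^2}^2 = \|w\|_{L^2}^2$, i.e.\ $z$ and $w$ are $L^2$-equinorm, while $\int_0^2 z\overline{w}\,\mathrm{d}t = 0$ says $\langle z, w \rangle_{L^2} = 0$, i.e.\ they are $L^2$-orthogonal. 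Part (b) is the shortest: the length of $\gamma$ is $\int_I \|\gamma'\| \,\mathrm{d}t = \int_I |\overline{q}\,\textbf{i}\,q|\,\mathrm{d}t = \int_I |q|^2 \,\mathrm{d}t$, using that $\textbf{i}$ is a unit quaternion and quaternionic multiplication is multiplicative on magnitudes; in complex coordinates $|q|^2 = |z|^2 + |w|^2$, so integrating gives $\|q\|_{L^2}^2 = \|z\|_{L^2}^2 + \|w\|_{L^2}^2$, as claimed.
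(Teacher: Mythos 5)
Your first paragraph (the reduction of $\gamma(t_0)=\gamma(t_1)$ to \eqref{eqn:self_intersection_condition}) and your proof of part (b) are correct, and they are exactly the ``elementary computations'' the paper has in mind. You also put your finger on the right obstacle: the frame condition $V(t_0)=V(t_1)$ is not visibly controlled by the two integrals. The genuine gap is that the step you then defer --- ``verify by direct substitution that $V(t_0)=V(t_1)$ is automatic'' --- cannot be carried out, because that claim is false. Take $z(t)=e^{5\pi i t/2}$, $w(t)=e^{\pi i t/2}$ on $I=[0,2]$, with $t_0=0$, $t_1=1$. Then $|z|^2-|w|^2\equiv 0$ and $z\overline{w}=e^{2\pi i t}$, so both integrals in \eqref{eqn:self_intersection_condition} vanish over $[0,1]$; but $(z,w)$ equals $(1,1)$ at $t=0$ and $(i,i)$ at $t=1$, and the formula for $V$ in \eqref{eqn:hopf_map_complex_coords} gives $V(0)=(0,1,0)$ while $V(1)=(0,-1,0)=-V(0)$. (This $q$ is anticlosed with $z,w$ equinorm and orthogonal, so it even represents a closed framed curve: its base curve has a self-tangency at which the two frame vectors are opposite.) The reason no integral condition can ever force frame-matching is that $V$ is a pointwise quadratic expression in $q$: one checks that $V(t_0)=V(t_1)$ holds precisely when $q(t_1)=\lambda u\, q(t_0)$ with $\lambda>0$ and $u$ a unit quaternion commuting with $\mathbf{j}$, a condition on the two values of $q$ that is independent of \eqref{eqn:self_intersection_condition}.

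So the ``if'' direction of the displayed biconditional fails as printed; only the ``only if'' direction survives. The statement that is both true and actually used later in the paper (in the corollary locating the nonembedded $\gamma_u$, where \eqref{eqn:self_intersection_condition} is deduced from a mere self-intersection of the base curve) is: $\gamma(t_0)=\gamma(t_1)$ if and only if \eqref{eqn:self_intersection_condition} holds --- which is precisely what your first paragraph proves, and is the honest generalization of \cite[Lemma 3.10]{needham}. With that correction the rest of your argument goes through: for the closure statement, smooth closure of $(\gamma,V)$ forces $q$ to be closed or anticlosed (the double-cover argument you cite), and once $q$ is closed or anticlosed the matching of $V$ and of all derivatives at the endpoints is automatic because $\gamma'$ and $V$ are quadratic in $q$, so the sign ambiguity cancels; the integrals then contribute exactly $\gamma(0)=\gamma(2)$, i.e., that $z$ and $w$ are $L^2$-equinorm and orthogonal. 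In short: keep your first and third paragraphs, but replace the second paragraph's claim of automatic frame-matching by the corrected statement of the pointwise criterion (or by the explicit pointwise condition on $q(t_0),q(t_1)$ above); as written, that step would fail.
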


The lemma implies that the space of closed framed curves is disconnected, since $\mathcal{L}\mathh^\ast \sqcup \mathcal{A}\mathh^\ast$ forms a disconnected subspace of $\mathcal{P}\mathh^\ast$. Indeed, framed loop space has two path components. The path component of $(\gamma,V)$ with embedded $\gamma$ is determined by the parity of the linking number of $\gamma$ and $\gamma + \epsilon V$ for small $\epsilon$. The path component of a framed curve with nonembedded base curve is determined by continuity (see \cite[Section 3.2.1]{needham2}).

For the rest of the paper, we will use the notation $\mathcal{V}$ to stand for either $\mathcal{L}\C$ or $\mathcal{A}\C$. We define the \emph{Stiefel manifold} or Hermitian-orthonormal 2-frames
$$
\mathrm{St}_2(\mathcal{V}) = \left\{(z,w) \in \mathcal{V}^2 \mid \left<z,w\right>_{L^2}=0, \; \|z\|_{L^2}=\|w\|_{L^2} = 1 \right\}
$$
in direct analogy with the finite-dimensional version. We will treat the Stiefel manifold as a codimension-4  (real) submanifold of $\mathcal{V}^2$ with a weak Riemannian structure induced by the real part of the $L^2$ inner product. We also define the dense open submanifold
$$
\mathrm{St}_2^\circ(\mathcal{V}) = \{(z,w) \in \St \mid (z(t),w(t)) \neq 0 \; \forall \; t \in I\}.
$$
The group $\mathrm{U}(2)$ acts freely on $\St$ by pointwise multiplication and we define the \emph{Grassmann manifold} to be the quotient $\Gr=\St/\mathrm{U}(2)$. We also define $\mathrm{Gr}_2^\circ(\mathcal{V}) = \mathrm{St}_2^\circ (\mathcal{V})/\mathrm{U}(2)$. The Grassmannian inherits a weak Riemannian structure from the $\mathrm{U}(2)$-invariant Riemannian metric on $\St$.

\begin{thm}{\cite[Theorem 3.15]{needham}}\label{thm:grassmannian}
The frame-Hopf map induces a diffeomorphism 
$$
\mathrm{Gr}^\circ_2(\mathcal{L}\C) \sqcup \mathrm{Gr}^\circ_2(\mathcal{A}\C) \approx \mathcal{M}.
$$ 
\end{thm}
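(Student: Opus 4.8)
The plan is to realize the claimed diffeomorphism by restricting the frame-Hopf map $\mathrm{H}$ to the Stiefel manifolds and checking that, after passing to the moduli space, its fibers are exactly the $\mathrm{U}(2)$-orbits. First I would observe that Lemma \ref{lem:fundamental_lemma} already cuts out the correct domain: a nonvanishing $q=(z,w) \in \mathcal{P}\mathh^\ast$ maps to a \emph{closed} framed curve precisely when $q$ is closed or anticlosed and $(z,w)$ is $L^2$-orthogonal and equinorm. Fixing the scale by the normalization $\mathrm{length}(\gamma)=2$, which by part (b) is the condition $\|z\|_{L^2}^2+\|w\|_{L^2}^2=2$, together with the $\mathrm{SO}(3)\times S^1$-quotient lets me further rescale to $\|z\|_{L^2}=\|w\|_{L^2}=1$. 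Thus $\mathrm{H}$ restricts to a map from $\mathrm{St}_2^\circ(\mathcal{L}\C) \sqcup \mathrm{St}_2^\circ(\mathcal{A}\C)$ into the length-$2$ closed framed curves based at the origin, the $\circ$ being forced because $\mathrm{H}$ requires $q$ nonvanishing (equivalently $\gamma$ immersed). The two summands are disjoint and remain so because the pointwise $\mathrm{U}(2)$-action preserves periodicity versus antiperiodicity, matching the two path components of framed loop space.

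The heart of the argument is the identification of the $\mathrm{U}(2)$-action on $\St$ with the shape-preserving group actions defining $\M$. Here I would carry out the short quaternionic computation showing that, under $q=z+w\qj$, right quaternionic multiplication $q \mapsto qu$ by a unit quaternion $u=\alpha+\beta\qj$ becomes the $\mathrm{SU}(2)$ matrix action $(z,w)\mapsto(z\alpha-w\overline\beta,\,z\beta+w\overline\alpha)$ and realizes the $\mathrm{SO}(3)$-rotation of $(\gamma,V)$ through the double cover $\mathrm{SU}(2)\to\mathrm{SO}(3)$, while left multiplication $q\mapsto e^{\qi\theta}q$ becomes the central scalar action $(z,w)\mapsto e^{\qi\theta}(z,w)$ and realizes global frame twisting. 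Since $\mathrm{U}(2)=\mathrm{SU}(2)\cdot Z(\mathrm{U}(2))$ with $\mathrm{SU}(2)\cap Z(\mathrm{U}(2))=\{\pm I\}$, these two families generate the full $\mathrm{U}(2)$-action, and the element $-I$ is exactly the sign ambiguity $q\mapsto -q$ of the double cover, which $\mathrm{H}$ collapses. Consequently $\mathrm{H}$ descends to a well-defined $\mathrm{U}(2)$-invariant map into $\M$, two points of $\St$ having the same image if and only if they lie in a common $\mathrm{U}(2)$-orbit; this gives a well-defined injection $\mathrm{Gr}_2^\circ(\mathcal{L}\C)\sqcup\mathrm{Gr}_2^\circ(\mathcal{A}\C)\hookrightarrow\M$.

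For surjectivity I would invoke the fact that $\mathrm{H}$ is a double cover onto framed paths modulo translation: any closed framed curve has a quaternionic representative, which Lemma \ref{lem:fundamental_lemma}(a) forces to be closed or anticlosed and orthonormal after scaling, hence $\mathrm{U}(2)$-equivalent to a point of $\mathrm{St}_2^\circ(\mathcal{L}\C)\sqcup\mathrm{St}_2^\circ(\mathcal{A}\C)$. To upgrade the resulting bijection to a diffeomorphism I would build the inverse explicitly rather than appeal to an inverse function theorem: given $[(\gamma,V)]\in\M$, take the length-$2$ origin-based representative, form the adapted moving frame $F=(T,V,T\times V):I\to\mathrm{SO}(3)$ with $T=\gamma'/\|\gamma'\|$, lift $F$ to $u:I\to\mathrm{SU}(2)$ (unique up to a global sign since $I$ is contractible), and set $q=\|\gamma'\|^{1/2}\,u$, so that $\overline q\qi q=\gamma'$ and $|q|^2=\|\gamma'\|$ recover the speed. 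Passing to $\mathrm{U}(2)$-orbits absorbs the sign, producing a two-sided inverse.

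The step I expect to be the main obstacle is the smoothness of this inverse in the tame Fr\'echet category. The lift $u$ is obtained by solving the linear frame ODE $u'=\tfrac12\,\Omega\,u$ whose coefficient $\Omega$ is built tamely from $F$ and its derivative, and the factor $\|\gamma'\|^{1/2}$ is tame because immersions have speed bounded away from $0$ on the compact $I$; one must verify that solving this ODE and extracting the square root are tame smooth operations and that the resulting assignment is compatible with the principal $\mathrm{U}(2)$-bundle $\mathrm{St}_2^\circ(\mathcal{V})\to\mathrm{Gr}_2^\circ(\mathcal{V})$, so that the descended map is genuinely smooth with smooth inverse. The remaining verifications — freeness of the $\mathrm{U}(2)$-action, the submanifold and quotient structures, and the equivariance of $\mathrm{H}$ — are routine given the setup.
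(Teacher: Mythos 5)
Your proposal is correct and follows essentially the same route as the paper, which only sketches this proof (deferring details to the cited reference \cite{needham}): the two steps there are exactly yours, namely using Lemma \ref{lem:fundamental_lemma} to show that $\mathrm{St}_2^\circ(\mathcal{L}\C) \sqcup \mathrm{St}_2^\circ(\mathcal{A}\C)$ double-covers the fixed-length closed framed curves, and then matching the $\mathrm{U}(2)$-action with the $\mathrm{SO}(3)\times S^1$-action via the quaternionic decomposition $\mathrm{U}(2)=\mathrm{SU}(2)\cdot Z(\mathrm{U}(2))$ that you compute. One small wording slip: the normalization $\|z\|_{L^2}=\|w\|_{L^2}=1$ follows from the equinorm closure condition of Lemma \ref{lem:fundamental_lemma}(a) together with the length normalization of part (b), not from the $\mathrm{SO}(3)\times S^1$-quotient.
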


To prove the theorem, we use Lemma \ref{lem:fundamental_lemma} to show that the disjoint union $\mathrm{St}^\circ_2(\mathcal{L}\C) \sqcup \mathrm{St}_2^\circ(\mathcal{A}\C)$ double-covers the space of fixed-length closed curves. Next we show that frame-Hopf map is equivariant with respect to the $\mathrm{U}(2)$-action in quaternionic coordinates and the $\mathrm{SO}(3) \times S^1$-action on framed loop space by rotations and global frame twists.

\subsection{The Energy Functional in Quaternionic Coordinates}\label{sec:energy_functional_quaternionic}

The geometry of a framed path $(\gamma,V)$ is described by its \emph{curvatures},
$$
\kappa_1=\kappa_1(\gamma,V)=\left<D_s^2 \gamma, V\right> \;\; \mbox{ and } \;\; \kappa_2=\kappa_2(\gamma,V) = \left<D_s^2 \gamma, D_s \gamma \times V\right>
$$
and by its twist rate $\mathrm{tw}$ defined in \eqref{eqn:curvature_and_twist}. We also consider the stretch rate $\mathrm{st}$ defined in \eqref{eqn:stretch_rate}. We wish to describe these invariants in quaternionic coordinates. Let $\left<\cdot,\cdot\right>_\mathbb{H}$ denote the Euclidean inner product on $\mathbb{H} \approx \R^4$, which is expressed in quaternionic notation as $\left<p,q\right>_\mathh = \frac{1}{2}(p \overline{q} + q\overline{p})$.  We use $\left<\cdot,\cdot\right>_{\C^2}$ to denote the standard Hermitian inner product on $\C^2$.

\begin{lem}\label{lem:darboux_curvatures}
Let $(\gamma,V)$ be a framed curve and $q \in \mathcal{P}\mathbb{H}^\ast$ one of its associated quaternionic paths. Then the geometric invariants of $(\gamma,V)$ are given by
\begin{equation}\label{eqn:quaternionic_invariants}
\kappa_1 = -2 \frac{\left<q',\textbf{k}q\right>_\mathh}{|q|^4}, \;\;\; \kappa_2 = -2 \frac{\left<q',\textbf{j}q\right>_\mathh}{|q|^4}, \;\;\; \mathrm{tw} = -2 \frac{\left<q',\textbf{i}q\right>_\mathh}{|q|^4}, \;\;\; \mathrm{st} = 2 \frac{\left<q',q\right>_\mathh}{|q|^4}.
\end{equation}
These are given in complex coordinates $q=(z,w)$ by
\begin{equation}\label{eqn:complex_invariants}
\begin{array}{ll}
\displaystyle \kappa_1 = -2 \frac{\mathrm{Im}\left<(z',w'),(-\overline{w},\overline{z})\right>_{\C^2}}{(|z|^2+|w|^2)^2}, &\hspace{.2in} \displaystyle \kappa_2 = -2 \frac{\mathrm{Re}\left<(z',w'),(-\overline{w},\overline{z})\right>_{\C^2}}{(|z|^2+|w|^2)^2} \\
& \\
\displaystyle \mathrm{tw} = -2 \frac{\mathrm{Im}\left<(z',w'),(z,w)\right>_{\C^2}}{(|z|^2+|w|^2)^2}, &\hspace{.2in}  \displaystyle \mathrm{st} = 2 \frac{\mathrm{Re}\left<(z',w'),(z,w)\right>_{\C^2}}{(|z|^2+|w|^2)^2}.\end{array}
\end{equation}
\end{lem}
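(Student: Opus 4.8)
The plan is to compute the four geometric invariants directly from the frame-Hopf map \eqref{eqn:frame_hopf_map} and then rewrite the resulting expressions in quaternionic and complex form. The starting observation is that $\gamma' = \overline{q}\mathbf{i}q$ and $\|\gamma'\| = |q|^2$, so the arclength derivative is $D_s = |q|^{-2}\frac{d}{dt}$. The key structural fact driving the whole computation is that $\{1,\mathbf{i},\mathbf{j},\mathbf{k}\}$ conjugated by $q/|q|$ furnishes an orthonormal frame of $\mathrm{Im}(\mathh)\approx\R^3$ together with its normal direction: specifically $\overline{q}\mathbf{i}q/|q|^2$, $\overline{q}\mathbf{j}q/|q|^2$, $\overline{q}\mathbf{k}q/|q|^2$ form the adapted orthonormal triad $(D_s\gamma, V, D_s\gamma\times V)$. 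This is precisely the manifestation of the $\mathrm{SU}(2)\to\mathrm{SO}(3)$ double cover alluded to in the text, and it is what makes the four invariants align so cleanly with the four basis directions $\mathbf{k},\mathbf{j},\mathbf{i},1$.

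First I would differentiate $\gamma' = \overline{q}\mathbf{i}q$ to get $\gamma'' = \overline{q}'\mathbf{i}q + \overline{q}\mathbf{i}q'$, and similarly differentiate $V = \overline{q}\mathbf{j}q/|q|^2$. Then I would substitute into the definitions $\kappa_1 = \langle D_s^2\gamma, V\rangle$, $\kappa_2 = \langle D_s^2\gamma, D_s\gamma\times V\rangle$, $\mathrm{tw} = \langle D_s V, D_s\gamma\times V\rangle$, and $\mathrm{st} = |q|^{-2} D_s\|\gamma'\| = |q|^{-2} D_s|q|^2$, converting every inner product of imaginary quaternions into the quaternionic bilinear form $\langle p,r\rangle_\mathh = \frac{1}{2}(p\overline{r}+r\overline{p})$. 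The recurring simplification is that terms of the form $\langle \overline{q}a q, \overline{q}b q\rangle$ reduce to $|q|^4\langle a,b\rangle_\mathh$ by multiplicativity of the norm, and cross terms coupling $q'$ against the frame vectors collapse—after using $\langle \mathbf{i},\mathbf{j}\rangle_\mathh = \langle \mathbf{i},\mathbf{k}\rangle_\mathh = 0$ and the quaternion multiplication table—to a single expression $\langle q', \mathbf{c}q\rangle_\mathh/|q|^4$ for the appropriate unit $\mathbf{c}\in\{\mathbf{k},\mathbf{j},\mathbf{i},1\}$. The factor $-2$ (respectively $+2$ for stretch) and the power $|q|^4$ in the denominator should fall out naturally once the $D_s$ normalizations and the derivative of $|q|^2$ are tracked carefully.

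For the stretch rate the computation is the shortest: $D_s|q|^2 = |q|^{-2}(|q|^2)' = |q|^{-2}\cdot 2\langle q',q\rangle_\mathh$, so $\mathrm{st} = |q|^{-2}\cdot|q|^{-2}\cdot 2\langle q',q\rangle_\mathh = 2\langle q',q\rangle_\mathh/|q|^4$, matching \eqref{eqn:quaternionic_invariants} immediately and serving as a useful warm-up and sign check for the others. Once the four quaternionic formulas in \eqref{eqn:quaternionic_invariants} are established, passing to the complex coordinates \eqref{eqn:complex_invariants} is a bookkeeping step: under $q = z + w\mathbf{j}\leftrightarrow(z,w)$ one checks that $\mathbf{i}q\leftrightarrow(iz,-iw)$, $\mathbf{j}q\leftrightarrow(-\overline{w},\overline{z})$ (after accounting for $\mathbf{j}i=-i\mathbf{j}$), and $\mathbf{k}q\leftrightarrow(i\overline{w},i\overline{z})$ or the analogous pairing, and then identifies $\langle q',\mathbf{c}q\rangle_\mathh$ with the real or imaginary part of the Hermitian product $\langle(z',w'),\,\cdot\,\rangle_{\C^2}$. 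The translation between the real quaternionic form and the Hermitian $\C^2$ form hinges on the identity $\langle p,r\rangle_\mathh = \mathrm{Re}\langle p,r\rangle_{\C^2}$ when both are read as pairs, with the $\mathbf{i}$-direction producing the imaginary part.

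The main obstacle I anticipate is neither the conceptual structure nor the coordinate change but the disciplined management of the noncommutative quaternion algebra and the many sign conventions—in particular, keeping straight how $\mathbf{j}$ and $\mathbf{k}$ conjugation interacts with the complex $i=\mathbf{i}$ when computing $\overline{q}\,\mathbf{c}\,q$ in the $(z,w)$ representation, and verifying that the overall signs of $\kappa_1,\kappa_2,\mathrm{tw}$ come out as stated rather than negated. Because the lemma asserts these are routine (``elementary computations''), the proof should simply record the adapted-frame identity, carry out the differentiations, and exhibit the collapse to the single-term expressions, leaving the verification of the complex-coordinate identities to direct substitution; I would present the stretch case in full as a template and indicate that the remaining three follow by the identical mechanism with the corresponding frame vector.
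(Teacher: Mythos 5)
Your proposal is correct in outline, but it is genuinely different from---and more self-contained than---the paper's own proof. The paper does not derive the formulas for $\kappa_1,\kappa_2,\mathrm{tw}$ at all: it cites \cite[Section 2.6]{dichmann} for them (noting only that sign conventions differ), derives the $\mathrm{st}$ formula by exactly the computation you give as your warm-up, and dismisses the complex formulas \eqref{eqn:complex_invariants} as arithmetic via the isomorphism $(\mathh,\left<\cdot,\cdot\right>_\mathh)\approx(\C^2,\mathrm{Re}\left<\cdot,\cdot\right>_{\C^2})$. Your derivation of the remaining three formulas from the adapted-frame identity $(D_s\gamma,\,V,\,D_s\gamma\times V)=\left(\overline{q}\qi q,\,\overline{q}\qj q,\,\overline{q}\qk q\right)/|q|^2$ is the right mechanism and does work: after differentiating, the term proportional to a frame vector dies by orthogonality (using $\left<\overline{q}aq,\overline{q}bq\right>_\mathh=|q|^4\left<a,b\right>_\mathh$), and the two cross terms involving $q'$ contribute equally, producing the factor of $2$; for instance one gets $\mathrm{tw}=2\,\mathrm{Re}(\qi q'\overline{q})/|q|^4=-2\left<q',\qi q\right>_\mathh/|q|^4$ as claimed. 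What your route buys is an actual verification of the signs rather than inheriting them from a source with slightly different conventions; what the paper's route buys is brevity.

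One concrete warning: two entries of your complex-coordinate dictionary are wrong, and they are precisely the left-versus-right multiplication traps you said you were worried about. With the paper's identification $q=z+w\qj$, \emph{left} multiplication gives $\qi q = iz + (iw)\qj \leftrightarrow (iz,iw)$; your $(iz,-iw)$ is $q\qi$, since $\qi(w\qj)=(iw)\qj$ while $(w\qj)\qi=(-iw)\qj$. Likewise $\qk q \leftrightarrow (-i\overline{w},\,i\overline{z})$, not $(i\overline{w},i\overline{z})$. The formulas \eqref{eqn:quaternionic_invariants} use left multiplication, and with your dictionary the twist would come out as $-2\left(\mathrm{Im}(z'\overline{z})-\mathrm{Im}(w'\overline{w})\right)/|q|^4$ rather than the stated $-2\,\mathrm{Im}\left<(z',w'),(z,w)\right>_{\C^2}/|q|^4$. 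With the corrected dictionary one checks $\left<q',\qi q\right>_\mathh=\mathrm{Im}\left<(z',w'),(z,w)\right>_{\C^2}$ and $\left<q',\qk q\right>_\mathh=\mathrm{Im}\left<(z',w'),(-\overline{w},\overline{z})\right>_{\C^2}$, which give exactly \eqref{eqn:complex_invariants}; your entry $\qj q\leftrightarrow(-\overline{w},\overline{z})$ is already correct. Fix those two conversions and your write-up goes through as planned.
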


\begin{proof}
The formulas \eqref{eqn:quaternionic_invariants} for the invariants $\kappa_1$, $\kappa_2$ and $\mathrm{tw}$ are well-known, although typically expressed slightly differently (see, e.g., \cite[Section 2.6]{dichmann}, where the minor discrepancies with our formulas are due to small differences in the map used to obtain a framed curve from a quaternionic path). The formula for $\mathrm{st}$ appears in \cite{tunay}, but is less well-known. To derive it, first note that $\|\gamma'\| = |q|^2$ holds pointwise. Then
$$
\mathrm{st}=\frac{1}{\|\gamma'\|^2} \frac{d}{dt} \|\gamma'\| = \frac{1}{|q|^4} \frac{d}{dt} q \overline{q} =  \frac{1}{|q|^4} \left(q' \overline{q} + q \overline{q'}\right) =  \frac{1}{|q|^4} 2\left<q',q\right>_\mathh.
$$
The complex formulas \eqref{eqn:complex_invariants} can be derived by writing $q=z+w\textbf{j}$ and using quaternion and complex arithmetic and the obvious isomorphism 
\begin{equation}\label{eqn:inner_product_iso}
(\mathh,\left<\cdot,\cdot\right>_\mathh) \approx (\C^2, \mathrm{Re} \left<\cdot, \cdot \right>_{\C^2})
\end{equation}
of inner product spaces.
\end{proof}

\begin{cor}\label{cor:energy_functional_quaternionic}
The generalized energy of a framed curve $(\gamma,V)$ is given in terms of its quaternionic representative $q$ by
$$
\mathrm{E}(\gamma,V)=4 \int_I |q'|^2 \; \mathrm{d}t.
$$
\end{cor}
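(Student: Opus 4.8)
The plan is to collapse the energy integrand into a single Pythagorean sum over an orthogonal quaternionic frame. First I would record the pointwise identity $\kappa^2 = \kappa_1^2 + \kappa_2^2$. This follows by expanding $D_s^2\gamma$ in the orthonormal frame $\{D_s\gamma, V, D_s\gamma \times V\}$: differentiating $\|D_s\gamma\| \equiv 1$ forces $D_s^2\gamma \perp D_s\gamma$, so $D_s^2\gamma$ has only the two components $\kappa_1 = \langle D_s^2\gamma, V\rangle$ and $\kappa_2 = \langle D_s^2\gamma, D_s\gamma \times V\rangle$, whence $\kappa^2 = \|D_s^2\gamma\|^2 = \kappa_1^2 + \kappa_2^2$. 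Consequently the integrand $\kappa^2 + \mathrm{tw}^2 + \mathrm{st}^2$ equals $\kappa_1^2 + \kappa_2^2 + \mathrm{tw}^2 + \mathrm{st}^2$, which is exactly the sum of squares of the four invariants computed in Lemma \ref{lem:darboux_curvatures}.

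Next I would handle the measure. Since $\|\gamma'\| = |q|^2$ pointwise (as noted in the proof of Lemma \ref{lem:darboux_curvatures}) and $\mathrm{d}s = \|\gamma'\|\,\mathrm{d}t$, the weight appearing in $\mathrm{E}$ becomes $\|\gamma'\|^2\,\mathrm{d}s = \|\gamma'\|^3\,\mathrm{d}t = |q|^6\,\mathrm{d}t$. Substituting the four formulas of \eqref{eqn:quaternionic_invariants}, each of which carries a factor $|q|^{-4}$, the integrand acquires an overall factor $4|q|^{-8}$ multiplying the sum of the four squared numerators, and this will later meet the weight $|q|^6$.

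The heart of the argument is the observation that $\{q,\textbf{i}q,\textbf{j}q,\textbf{k}q\}$ is an orthogonal basis of $\mathh \approx \R^4$ with each vector of norm $|q|$. This is because left multiplication by $q$ rescales the Euclidean inner product: a direct computation using $\langle p,q\rangle_\mathh = \tfrac12(p\overline{q}+q\overline{p})$ gives $\langle aq, bq\rangle_\mathh = |q|^2\langle a,b\rangle_\mathh$ for all $a,b \in \mathh$, so the orthonormal basis $\{1,\textbf{i},\textbf{j},\textbf{k}\}$ is carried to an orthogonal basis of common norm $|q|$. Applying the Pythagorean (Parseval) identity to $q'$ expanded in this basis then yields
$$
\langle q',q\rangle_\mathh^2 + \langle q',\textbf{i}q\rangle_\mathh^2 + \langle q',\textbf{j}q\rangle_\mathh^2 + \langle q',\textbf{k}q\rangle_\mathh^2 = |q|^2\,|q'|^2 .
$$
Combining this with the two previous paragraphs gives $\kappa_1^2 + \kappa_2^2 + \mathrm{tw}^2 + \mathrm{st}^2 = 4|q'|^2/|q|^6$, and multiplying by the weight $|q|^6$ collapses the integrand to $4|q'|^2$, so that $\mathrm{E}(\gamma,V) = 4\int_I |q'|^2\,\mathrm{d}t$.

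The only step requiring genuine insight is recognizing that the four geometric invariants $\kappa_1,\kappa_2,\mathrm{tw},\mathrm{st}$ are precisely the four coordinates of $q'$ relative to the orthogonal frame $\{q,\textbf{i}q,\textbf{j}q,\textbf{k}q\}$; everything else is bookkeeping with powers of $|q|$. In particular, this explains why all four terms are needed: the stretch term $\mathrm{st}$, which is exactly what distinguishes $\mathrm{E}$ from the reparameterization-invariant $\mathrm{E}_{Kir}$, supplies the missing coordinate along $q$ itself and thereby completes the frame, making Parseval's identity applicable and forcing the weight to cancel exactly. I anticipate no real obstacle beyond carefully tracking the $|q|$ exponents through the substitution.
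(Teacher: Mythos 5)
Your proof is correct, and it takes a recognizably different route through the key step than the paper does. The paper follows the same skeleton---reduce $\mathrm{E}$ to $\int_I(\kappa_1^2+\kappa_2^2+\mathrm{tw}^2+\mathrm{st}^2)\|\gamma'\|^3\,\mathrm{d}t$ and collapse the integrand---but it invokes the complex-coordinate formulas \eqref{eqn:complex_invariants} of Lemma \ref{lem:darboux_curvatures} rather than the quaternionic ones \eqref{eqn:quaternionic_invariants}: it groups $\kappa_1^2+\kappa_2^2$ and $\mathrm{tw}^2+\mathrm{st}^2$ into the squared moduli $|-z'w+w'z|^2$ and $|z'\overline{z}+w'\overline{w}|^2$ (each weighted by $4/|q|^8$) and then verifies by direct expansion the Lagrange-type identity
$$
|-z'w+w'z|^2+|z'\overline{z}+w'\overline{w}|^2=\left(|z|^2+|w|^2\right)\left(|z'|^2+|w'|^2\right)=|q|^2|q'|^2.
$$
That identity is precisely your Parseval identity read in complex coordinates: $(z,w)$ and $(-\overline{w},\overline{z})$ form a Hermitian-orthogonal pair of common norm $|q|$ in $\C^2$, corresponding under $\mathh\approx\C^2$ to your real-orthogonal frame $\{q,\textbf{i}q,\textbf{j}q,\textbf{k}q\}$. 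So the two arguments are the same computation in different clothing. What yours buys is the conceptual explanation for why the collapse is forced: up to a common factor of $2/|q|^3$ and signs, the four invariants are exactly the coordinates of $q'$ in an orthogonal moving frame, with $\mathrm{st}$ supplying the coordinate along $q$ itself---a point the paper never makes explicit, and one that also explains why the restriction to $\mathcal{P}S^3$ (where $\langle q',q\rangle_\mathh\equiv 0$) recovers the classical Kirchhoff case. What the paper's version buys is that it stays entirely in the $(z,w)$ coordinates used for the Stiefel/Grassmannian formalism in the rest of the paper. One small slip in your write-up: the frame $\{q,\textbf{i}q,\textbf{j}q,\textbf{k}q\}$ is the standard basis \emph{right}-multiplied by $q$, not left-multiplied; the computation you actually display, $\langle aq,bq\rangle_\mathh=|q|^2\langle a,b\rangle_\mathh$, is the correct one and is all that is used.
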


\begin{proof}
Using the fact that the curvatures $\kappa_1, \kappa_2$ of a framed curve $(\gamma,V)$ are related to the curvature $\kappa$ of $\gamma$ by the formula $\kappa^2=\kappa_1^2+\kappa_2^2$ and the definition $\mathrm{d}s=\|\gamma'\|\mathrm{d}t$, the energy can be expressed as
$$ 
\mathrm{E}(\gamma,V) =\int_I (\kappa_1^2 + \kappa_2^2 + \mathrm{tw}^2 + \mathrm{st}^2)\|\gamma'\|^3 \; \mathrm{d}t. 
$$
Lemma \ref{lem:darboux_curvatures} implies that
$$
\kappa_1^2+\kappa_2^2=\frac{4}{|q|^8} \mathrm{Im}(-z'w + w'z)^2 + \mathrm{Re}(-z'w+w'z)^2 = \frac{4}{|q|^8} |-z'w + w'z|^2
$$
and similarly that $\mathrm{tw}^2 + \mathrm{st}^2 = 4 |z'\overline{z} + w'\overline{z}|^2/ |q|^8$. We then simplify
$$
|-z'w + w'z|^2 +  |z'\overline{z} + w'\overline{z}|^2 = (|z|^2+|w|^2)^2(|z'|^2+|w'|^2)^2=|q|^2 |q'|^2
$$
so that, using $\|\gamma'\|=|q|^2$, we have
$$
\mathrm{E}(\gamma,V)= \int_I \frac{4}{|q|^8} \cdot |q|^2 |q'|^2 \cdot |q|^6 \; \mathrm{d}t = 4 \int_I |q'|^2 \; \mathrm{d}t.
$$
\end{proof}

If we consider Kirchhoff energy $\mathrm{E}_{Kir}$ as the restriction of $\mathrm{E}$ to the submanifold of arclength-parameterized framed curves, then its quaternionic representative keeps the same form but is viewed as an energy functional on the submanifold $\mathcal{P}S^3 \subset \mathcal{P}\mathh^\ast$. This idea was essentially used in \cite[Section II]{hu}.

\begin{cor}\label{cor:L2_gradient}
The gradient of the restriction of $\mathrm{E}$ to $\mathcal{L}\mathh^\ast \sqcup \mathcal{A}\mathh^\ast$ with respect to  $\mathrm{Re}\left<\cdot,\cdot\right>_{L^2}$ is $\mathrm{grad} \, \mathrm{E} |_q = -8 q''$
\end{cor}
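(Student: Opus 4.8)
The plan is to compute the first variation of $\mathrm{E}$ directly and read off the gradient from the defining relation $\mathrm{d}\mathrm{E}_q(h) = \mathrm{Re}\langle \mathrm{grad}\,\mathrm{E}|_q, h\rangle_{L^2}$. By Corollary~\ref{cor:energy_functional_quaternionic} I may regard $\mathrm{E}$ as the functional $q \mapsto 4\int_I |q'|^2\,\mathrm{d}t = 4\int_I \langle q',q'\rangle_\mathh\,\mathrm{d}t$. Since $\mathcal{L}\mathh^\ast$ and $\mathcal{A}\mathh^\ast$ are open subsets of the linear spaces $\mathcal{L}\mathh$ and $\mathcal{A}\mathh$, the tangent space at any $q$ is the ambient linear space itself; in particular a tangent vector $h$ is itself a loop (resp.\ an antiloop). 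Differentiating $\mathrm{E}(q+\epsilon h)$ at $\epsilon=0$ and using bilinearity of $\langle\cdot,\cdot\rangle_\mathh$ gives $\mathrm{d}\mathrm{E}_q(h) = 8\int_I \langle q', h'\rangle_\mathh\,\mathrm{d}t$.

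Next I would integrate by parts to shift the derivative off $h$, obtaining $\mathrm{d}\mathrm{E}_q(h) = 8\left(\big[\langle q', h\rangle_\mathh\big]_0^2 - \int_I \langle q'', h\rangle_\mathh\,\mathrm{d}t\right)$. The crux of the argument is verifying that the boundary term $\langle q'(2),h(2)\rangle_\mathh - \langle q'(0),h(0)\rangle_\mathh$ vanishes on \emph{both} components. On $\mathcal{L}\mathh^\ast$ this is immediate from periodicity of $q'$ and $h$. On $\mathcal{A}\mathh^\ast$ both $q'$ and $h$ satisfy the antiperiodicity condition, so $q'(2)=-q'(0)$ and $h(2)=-h(0)$; the two sign flips cancel in the inner product and the boundary term again vanishes. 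This is the one place where the computation genuinely uses the structure of the antiloop space rather than that of general path space, and it is where I expect the only real care to be needed.

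With the boundary term eliminated, I would invoke the pointwise inner-product isomorphism~\eqref{eqn:inner_product_iso} to write $\int_I \langle q'', h\rangle_\mathh\,\mathrm{d}t = \mathrm{Re}\langle q'', h\rangle_{L^2}$, so that $\mathrm{d}\mathrm{E}_q(h) = \mathrm{Re}\langle -8q'', h\rangle_{L^2}$ for every tangent $h$; comparison with the defining relation yields $\mathrm{grad}\,\mathrm{E}|_q = -8q''$. A final remark confirms that $-8q''$ is an admissible tangent vector: if $q$ is a loop then $q''$ is a loop, and if $q$ is an antiloop then $q''$ is an antiloop, since differentiation preserves the antiperiodicity of every derivative; in each case $-8q''$ lies in the correct tangent space. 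The conceptual point to keep straight is that $\mathrm{Re}\langle\cdot,\cdot\rangle_{L^2}$ is only a \emph{weak} Riemannian metric, so the gradient is not obtained from a Riesz-type existence theorem—rather, the integration by parts exhibits $-8q''$ \emph{explicitly} as the representing vector, which is exactly what is required.
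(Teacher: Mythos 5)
Your proposal is correct and follows essentially the same route as the paper's proof: compute the first variation of the quaternionic energy from Corollary~\ref{cor:energy_functional_quaternionic}, integrate by parts using closedness or anticlosedness to kill the boundary term, and read off the representing vector via the isomorphism~\eqref{eqn:inner_product_iso}. Your explicit verification that the two sign flips cancel in the antiperiodic case, and that $-8q''$ lies in the correct tangent space, simply spells out what the paper compresses into the phrase ``under the assumption that $q$ and $Q$ are both closed or anticlosed.''
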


\begin{proof}
A standard variational calculation shows that the derivative of $\mathrm{E}$ at $q \in \mathcal{L}\mathh^\ast \sqcup \mathcal{A}\mathh^\ast$ in the direction $Q \in T_q \left(\mathcal{L}\mathh^\ast \sqcup \mathcal{A}\mathh^\ast\right) \approx \mathcal{L}\mathh \cup \mathcal{A}\mathh$ is given by
$$
D\mathrm{E}(q)(Q)= 4 \int_I q' \overline{Q'} + Q' \overline{q'} \; \mathrm{d}t = 8 \mathrm{Re} \int_I \left<q', Q' \right>_{\C^2} \; \mathrm{d}t,
$$
where the second equality uses the isomorphism \eqref{eqn:inner_product_iso}. Integrating by parts under the assumption that $q$ and $Q$ are both closed or anticlosed, we have
$$
D\mathrm{E}(q)(Q) = -8\mathrm{Re}\int_I \left<q'',Q\right>_{\C^2} \; \mathrm{d}t = \mathrm{Re} \left<-8 q'', Q \right>_{L^2}.
$$
\end{proof}

\begin{remark}
It may seem arbitrary to use the $L^2$ metric on $\mathrm{P}\C^2$ as a tool for calculations about framed paths. We show in \cite{needham} that the $L^2$ metric on $\mathcal{P}\C$ is the pullback under $\mathrm{H}$ of a natural metric on framed path space. The framed path space metric is closely related to the \emph{elastic metrics} used on spaces of plane curves which have generated a lot of recent interest in the field of computer vision. Refer to \cite{bauer} for a survey of recent work on this Riemannian approach to curve matching problems.
\end{remark}

\section{Critical Points of $\mathrm{E}_\mathcal{M}$}\label{sec:critical_points}

\subsection{Main Theorem}

We now focus on the induced map $\mathrm{E}_\mathcal{M}$ on the moduli space of closed framed curves $\mathcal{M}$ in order to study the knot types which occur as critical points. The critical points of $\mathrm{E}_\mathcal{M}$ have a surprisingly simply form when written in quaternionic/complex coordinates. We define a set of quaternionic curves $\mathfrak{Q}$ as follows. For $\xi_1,\xi_2,\zeta_1,\zeta_2 \in \C$ and $c,d \in \Z$, let
\begin{equation}\label{eqn:solution_quaternionic_curves}
q(c,d,\xi_1,\xi_2,\zeta_1,\zeta_2) = \left(\begin{array}{c}
z(c,\xi_1,\xi_2) \\
w(d,\zeta_1,\zeta_2) \end{array}\right) = \left(\begin{array}{c}
\xi_1 \mathrm{e}(c) + \xi_2 \mathrm{e}( -c) \\
\zeta_1 \mathrm{e}(d) + \zeta_2 \mathrm{e}(-d)
\end{array}\right),
\end{equation}
where we use the shorthand notation $\mathrm{e}(\lambda)$ for the function $t \mapsto \exp (i\pi \lambda t /2)$. Let $\mathfrak{Q}=\mathfrak{Q}_1 \sqcup \mathfrak{Q}_2$, where the $\mathfrak{Q}_j$ contain quaternionic curves of the form \eqref{eqn:solution_quaternionic_curves} with particular parameter choices. The first set is defined simply by
$$
\mathfrak{Q}_1 = \{q(c,c,1,0,0,1) \mid c \neq 0 \}
$$
The set $\mathfrak{Q}_2$ is more complicated; it contains $q(c,d,\xi_1,\xi_2,\zeta_1,\zeta_2)$ such that all of the following conditions hold:
\begin{enumerate}
\item $c=d \; \mathrm{mod} \, 2$,
\item $c > d \geq 0$,
\item $\xi_1$ is a real number greater than or equal to zero,
\item $|\xi_1|^2 + |\xi_2|^2 = |\zeta_1|^2 + |\zeta_2|^2 = 1$.
\end{enumerate}

\begin{thm}\label{thm:E_crit_points_parameterizations}
The critical points of the generalized Kirchhoff energy $\mathrm{E}_\mathcal{M}$ can be expressed as quaternionic curves of the form \eqref{eqn:solution_quaternionic_curves}. In particular, the critical points  are in bijective correspondence with the quaternionic parameterizations in $\mathfrak{Q}$.
\end{thm}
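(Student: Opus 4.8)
The plan is to transport the variational problem to the Stiefel manifold using Theorem \ref{thm:grassmannian} and Corollary \ref{cor:energy_functional_quaternionic}, and then to exploit the $\mathrm{U}(2)$-symmetry to diagonalize the Euler--Lagrange system into a pair of decoupled linear ODEs whose periodic solutions are exactly the trigonometric curves \eqref{eqn:solution_quaternionic_curves}. Concretely, for $\mathcal{V}$ equal to either $\mathcal{L}\C$ or $\mathcal{A}\C$, Theorem \ref{thm:grassmannian} identifies $\mathcal{M}$ with $\mathrm{Gr}^\circ_2(\mathcal{L}\C)\sqcup\mathrm{Gr}^\circ_2(\mathcal{A}\C)$, and Corollary \ref{cor:energy_functional_quaternionic} shows that the induced functional is a constant multiple of $(z,w)\mapsto \|z'\|_{L^2}^2+\|w'\|_{L^2}^2$. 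Since this functional is $\mathrm{U}(2)$-invariant and $\Gr=\St/\mathrm{U}(2)$ is a Riemannian submersion, a point of the Grassmannian is critical for $\mathrm{E}_\mathcal{M}$ if and only if any of its lifts is a constrained critical point of $\mathrm{E}$ on $\St$. Thus it suffices to find the constrained critical points on $\St$ and then to identify their $\mathrm{U}(2)$-orbits.

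Next I would write down the Euler--Lagrange equations. By Corollary \ref{cor:L2_gradient} the ambient gradient is $\mathrm{grad}\,\mathrm{E}=-8q''$, so $(z,w)\in\St$ is critical exactly when $(z'',w'')$ lies in the $L^2$-normal space of $\St$, which is spanned by the gradients of the four defining constraints $\|z\|_{L^2}=\|w\|_{L^2}=1$ and $\langle z,w\rangle_{L^2}=0$. This produces a constant-coefficient system $-z''=\lambda_1 z+\mu w$, $-w''=\bar\mu z+\lambda_2 w$, that is, $-\Psi''=M\Psi$ with $\Psi=(z,w)^\top$ and $M$ a $2\times 2$ Hermitian matrix of Lagrange multipliers.

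The key step is to use the residual $\mathrm{U}(2)$-freedom to diagonalize $M$: replacing $(z,w)$ by $U(z,w)$ for a suitable constant $U\in\mathrm{U}(2)$ decouples the system into eigenvalue problems $-u''=\alpha u$ and $-v''=\beta v$ with $\alpha,\beta\in\R$. The closure or anticlosure condition defining $\mathcal{V}$ then forces periodic or antiperiodic data on $u,v$ and all derivatives; this rules out $\alpha,\beta<0$, allows $\alpha=0$ only as a constant mode in the closed case, and quantizes the positive eigenvalues as $\alpha=(\pi c/2)^2$ and $\beta=(\pi d/2)^2$ for integers $c,d$, with even parity in the closed case and odd parity in the anticlosed case, so that $c\equiv d\pmod 2$. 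The general solution is then precisely the trigonometric form \eqref{eqn:solution_quaternionic_curves}, and the equinorm normalization inherited from $\St$ (equivalently Lemma \ref{lem:fundamental_lemma}) yields the norm condition (4).

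The hard part, and the last step, is to pin down a unique representative in each $\mathrm{U}(2)$-orbit so as to obtain an honest bijection with $\mathfrak{Q}$. Here I would carefully account for the symmetries remaining after diagonalization: the ordering ambiguity of the two eigenvalues (fixed by demanding $c>d$, or else by passing to the degenerate branch below), the diagonal torus of residual phases in $\mathrm{U}(2)$ used to normalize the coefficients as in condition (3), and the sign ambiguity in the frequencies $\pm c,\pm d$ (absorbed by the choice $c>d\ge 0$). The genuinely delicate case is the coincident-frequency locus $c=d$, where $M$ is scalar and the full $\mathrm{U}(2)$ survives as residual gauge, acting transitively on the Hermitian $2$-frames of the two-dimensional eigenspace; this collapses the entire family to the single orbit represented by $q(c,c,1,0,0,1)$, which is exactly why $\mathfrak{Q}$ must be split into $\mathfrak{Q}_1$ and $\mathfrak{Q}_2$. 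I would finish by verifying injectivity (distinct normalized parameters give non-$\mathrm{U}(2)$-equivalent frames) and surjectivity (every orbit admits such a representative), and by checking that every curve of the form \eqref{eqn:solution_quaternionic_curves} with these parameters is genuinely critical, so that the necessary Lagrange conditions are also sufficient.
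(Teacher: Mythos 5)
Your proposal is correct and follows essentially the same route as the paper's proof: reduce via Theorem \ref{thm:grassmannian} and Corollary \ref{cor:L2_gradient} to a constrained critical point problem on $\St$, obtain the Hermitian Lagrange-multiplier system $-\Psi''=M\Psi$, diagonalize $M$ using the residual $\mathrm{U}(2)$-freedom, quantize the eigenvalues via the closure/anticlosure conditions to get the form \eqref{eqn:solution_quaternionic_curves} with $c\equiv d \pmod 2$, and normalize $\mathrm{U}(2)$-orbit representatives with the coincident-frequency case $c=\pm d$ treated separately (your observation that the scalar-$M$ locus collapses to a single orbit is exactly the paper's Claim 2, explaining the $\mathfrak{Q}_1$/$\mathfrak{Q}_2$ split). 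The only differences are presentational: the paper records an explicit orthonormal basis for the normal space (Lemma \ref{lem:normal_spaces}) where you invoke gradients of the constraints, and it organizes the orbit-normalization into Claims 1--3 rather than your symmetry-accounting sketch.
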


The proof will use the following lemma.

\begin{lem}\label{lem:normal_spaces}
Let $q=(z,w) \in \mathrm{St}_2(\mathcal{V})$. The normal space to $T_q \St$ in $T_q \mathcal{V}^2$ has orthonormal basis
\begin{equation}\label{eqn:stiefel_normal_space}
(z, 0), \; (0,w), \; \frac{1}{\sqrt{2}}(w,z),\; \frac{1}{\sqrt{2}}(-i w, i z)
\end{equation}
with respect to $\mathrm{Re}\left<\cdot,\cdot\right>_{L^2}$.
\end{lem}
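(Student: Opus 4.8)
The plan is to realize $\St$ as a regular level set and to read off the normal space as the span of the $L^2$-gradients of the defining constraints. Concretely, I would introduce the constraint map $c : \mathcal{V}^2 \to \R^4$ with components $c_1(z,w) = \|z\|_{L^2}^2 - 1$, $c_2(z,w) = \|w\|_{L^2}^2 - 1$, $c_3(z,w) = \mathrm{Re}\langle z,w\rangle_{L^2}$ and $c_4(z,w) = \mathrm{Im}\langle z,w\rangle_{L^2}$, so that $\St = c^{-1}(0)$. Each $c_j$ is a real-valued smooth function on $\mathcal{V}^2$, so its differential at $q$ can be written as $Dc_j(q)(Z,W) = \mathrm{Re}\langle (Z,W), \nabla c_j\rangle_{L^2}$ for a unique gradient $\nabla c_j \in \mathcal{V}^2$, and the tangent space $T_q\St = \ker Dc(q)$ is by definition the $\mathrm{Re}\langle\cdot,\cdot\rangle_{L^2}$-orthogonal complement of $\{\nabla c_1,\dots,\nabla c_4\}$. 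This reduces the lemma to computing the four gradients and checking that, after normalization, they form an orthonormal set.

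For the gradient computations I would differentiate each constraint along a direction $(Z,W) \in T_q\mathcal{V}^2 \cong \mathcal{V}^2$. The norm constraints give $Dc_1(q)(Z,W) = 2\,\mathrm{Re}\langle Z,z\rangle_{L^2} = \mathrm{Re}\langle (Z,W),(2z,0)\rangle_{L^2}$, and symmetrically $\nabla c_2 = (0,2w)$. For the off-diagonal constraint I would expand $D\langle z,w\rangle_{L^2}(Z,W) = \langle Z,w\rangle_{L^2} + \langle z,W\rangle_{L^2}$ and take real and imaginary parts; the only genuine care is to move the conjugation onto the second slot using $\mathrm{Re}\langle z,W\rangle = \mathrm{Re}\langle W,z\rangle$ together with the identities $\langle Z,iw\rangle = -i\langle Z,w\rangle$ and $\langle W,-iz\rangle = i\langle W,z\rangle$. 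This yields $\nabla c_3 = (w,z)$ and $\nabla c_4 = (iw,-iz)$, the latter being $-(-iw,iz)$. Up to the positive scalars that vanish upon normalization, these are exactly the four vectors in the statement.

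Finally I would normalize and verify orthonormality by direct $L^2$ computation using the defining relations $\|z\|_{L^2} = \|w\|_{L^2} = 1$ and $\langle z,w\rangle_{L^2} = 0$. The computations $\|(w,z)\|^2 = \|w\|^2 + \|z\|^2 = 2$ and $\|(-iw,iz)\|^2 = \|w\|^2 + \|z\|^2 = 2$ account for the factors of $1/\sqrt{2}$, while $\|(z,0)\| = \|(0,w)\| = 1$. All six pairwise real inner products then vanish; the only two that are not immediate are $\mathrm{Re}\langle(z,0),(-iw,iz)\rangle_{L^2} = \mathrm{Re}\!\big(i\langle z,w\rangle_{L^2}\big)$, which collapses by orthogonality, and $\mathrm{Re}\langle(w,z),(-iw,iz)\rangle_{L^2} = \mathrm{Re}\!\big(i\|w\|^2 - i\|z\|^2\big)$, which collapses by the equinorm condition. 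This gives the claimed orthonormal basis.

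I expect the real subtlety, beyond the bookkeeping of conjugations and factors of $i$, to be the justification that in this weak-Riemannian, infinite-dimensional setting the span of the gradients genuinely exhausts the normal space. This is handled by the structural observation above: $T_q\St = \ker Dc(q)$ equals the $\mathrm{Re}\langle\cdot,\cdot\rangle_{L^2}$-orthogonal complement of the finite-dimensional space $N = \mathrm{span}\{\nabla c_1,\dots,\nabla c_4\}$, and the orthonormality verification shows the four gradients are linearly independent, so $N$ is a genuine $4$-dimensional subspace with $N \subseteq (T_q\St)^{\perp}$ and $N \cap T_q\St = 0$. Since $\St$ has codimension $4$ (as already recorded), $N$ is precisely the normal space despite the weakness of the metric, and the orthonormal basis is as stated.
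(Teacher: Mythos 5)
Your proof is correct, and its skeleton matches the paper's: both realize $\St$ as the zero level set of the same constraint map $\mathcal{V}^2 \to \R^4$ and read the normal space off from that description, and your gradient computations and orthonormality checks are accurate. The differences are worth recording. Where the paper writes down $T_q \St$ as the kernel of the derivative of the constraint map and then \emph{verifies} that the four listed vectors are orthonormal and orthogonal to it, you \emph{derive} them as the constraint gradients $(2z,0)$, $(0,2w)$, $(w,z)$, $(iw,-iz)$, so the basis emerges rather than being checked after the fact. More substantively, the infinite-dimensional subtlety is discharged differently: the paper cites Hamilton's implicit function theorem for tame Fr\'echet manifolds to establish that $\St$ is a codimension-4 submanifold whose tangent space equals that kernel, whereas you take the codimension-4 submanifold structure as given (``as already recorded'') and supply an elementary weak-metric argument --- since $N = \mathrm{span}\{\nabla c_1,\ldots,\nabla c_4\}$ is finite-dimensional, $\mathcal{V}^2 = N \oplus N^{\perp}$ and hence $(N^{\perp})^{\perp} = N$ --- to conclude that the gradients exhaust the normal space. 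This trade is legitimate, since the lemma statement already presupposes that $T_q\St$ makes sense and the paper asserts the submanifold structure before the lemma; your argument gains self-containedness on exactly the point (weakness of the $L^2$ metric) that makes the lemma nontrivial, while the paper's version gains that it simultaneously substantiates the codimension-4 assertion, a step your proof displaces rather than discharges. One wording to repair: ``$T_q\St = \ker Dc(q)$ \ldots by definition'' overstates things, since only the inclusion $T_q\St \subseteq \ker Dc(q)$ is automatic for a level set; equality requires either Hamilton's theorem (the paper's route) or this inclusion combined with your codimension count, and since your final paragraph in fact only uses the inclusion, nothing breaks.
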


\begin{proof}
One can show that $T_q \St \subset T_q\mathcal{V}^2 \approx \mathcal{V}^2$ is equal to
\begin{equation}\label{eqn:stiefel_tangent_space}
 \left\{(Z,W) \in \mathcal{V}^2 \mid  \mathrm{Re}\left<z,Z\right>_{L^2} = \mathrm{Re}\left<w,W\right>_{L^2} = \left<z,W\right>_{L^2} + \left<Z,w\right>_{L^2}  =0\right\}.
\end{equation}
This is a straightforward generalization of the finite-dimensional real case \cite[Section 2.2.1]{edelman}. In this infinite-dimensional setting, we note that $\St$ is the level set at $\vec{0}$ for the map $\mathcal{V}^2 \rightarrow \R^4$ defined by
$$
(z,w) \mapsto \left(\|z\|_{L^2}^2-1,\|w\|_{L^2}^2-1,\mathrm{Re}\left<z,w\right>_{L^2},\mathrm{Im}\left<z,w\right>_{L^2}\right).
$$
The kernel of the derivative of this map is exactly the space \eqref{eqn:stiefel_tangent_space}. The claim then follows by applying \cite[Section III, Theorem 2.3.1]{hamilton}, which is an implicit function theorem for maps from a tame Fr\'{e}chet manifold to a finite-dimensional vector space. A straightforward calculation shows that each vector in the list \eqref{eqn:stiefel_normal_space} is $L^2$-orthogonal to $T_q \St$ and that the vectors are $L^2$-orthonormal.
\end{proof}

\begin{proof}[Proof of Theorem 1]
Applying Theorem \ref{thm:grassmannian} and Corollary \ref{cor:L2_gradient}, we can rephrase the problem as a search for the critical points of the map $\mathrm{Gr}_2^\circ(\mathcal{V}) \rightarrow \R$ given by
\begin{equation}\label{eqn:grassmannian_energy}
[q] \mapsto \int_I |q'|^2 \;\mathrm{d}t,
\end{equation}
where $[q]$ denotes the $\mathrm{U}(2)$-orbit of $q \in \mathrm{St}_2^\circ(\mathcal{V})$. The Grassmannian $\Gr$ can be treated locally as a codimension-8 submanifold of $\mathcal{V}^2$ since $\St$ is a codimension-4 submanifold of $\mathcal{V}^2$ and we have local slice charts to the quotient map $\St \rightarrow \St/\mathrm{U}(2)=\Gr$. Our task is therefore to determine those $q$ lying in this submanifold such that the $L^2$-gradient $\mathrm{grad} \, \mathrm{E}|_q$ lies in the 8-dimensional normal space to $\Gr$. The energy functional is invariant under the action of $\mathrm{U}(2)$ (whence the map \eqref{eqn:grassmannian_energy} is well-defined), and it follows that $\mathrm{grad} \, \mathrm{E}|_q$ is always orthogonal to the $\mathrm{U}(2)$-orbits. We therefore wish to determine the $\mathrm{U}(2)$-orbits of points $q=(z,w) \in \St$ with $\mathrm{grad} \, \mathrm{E}|_q$ in the $4$-dimensional normal direction to the Stiefel manifold. 

Using Corollary \ref{cor:L2_gradient} and Lemma \ref{lem:normal_spaces}, we see that the critical points of $\mathrm{E}$ must satisfy
$$
q''=\lambda_1(z,0) + \lambda_2 (0,w) + \lambda_3 (w,z) + \lambda_4 (-i w,i z),
$$
for some scalars $\lambda_j \in \R$. In matrix form,
$$
\left(\begin{array}{c}
z'' \\
w '' \end{array}\right) = \left(\begin{array}{cc}
\lambda_1 & \lambda_3 - i \lambda_4 \\
\lambda_3 + i \lambda_4 &  \lambda_2 \end{array}\right) \left(\begin{array}{c}
z \\
w \end{array}\right).
$$
The Hermitian coefficient matrix is diagonalizable by unitary matrices, and since we are searching for $q$ only up to the action of $\mathrm{U}(2)$, we can rename parameters and replace the system by
\begin{equation}\label{eqn:crit_points_of_E}
\left(\begin{array}{c}
z'' \\
w '' \end{array}\right) = \left(\begin{array}{cc}
\lambda_1 & 0\\
0 & \lambda_2 \end{array}\right) \left(\begin{array}{c}
z \\
w \end{array}\right).
\end{equation}
Solutions to the system \eqref{eqn:crit_points_of_E} are of the form
\begin{equation}\label{eqn:solutions}
\left(\begin{array}{c}
z(t) \\
w(t)
\end{array}\right) = \left(\begin{array}{c}
\xi_1 e^{t\sqrt{\lambda_1}} + \xi_2 e^{-t\sqrt{\lambda_1}} \\
\zeta_1 e^{t\sqrt{\lambda_2}} + \zeta_2 e^{-t\sqrt{\lambda_2}}
\end{array}\right),
\end{equation}
where $\xi_1,\xi_2,\zeta_1,\zeta_2 \in \C$ are constants of integration.  

There are two remaining tasks to complete the proof. We first need to determine conditions which guarantee that a curve of the form \eqref{eqn:solutions} lies in $\St$. Second, we need to normalize coefficients to get unique representatives over $\mathrm{U}(2)$-orbits. These tasks are divided into three claims.

\smallskip
\noindent \textbf{Claim 1.} A solution of the form \eqref{eqn:solutions} lies in $\mathcal{L}\mathbb{H} \cup \mathcal{A}\mathh$  if and only if $\sqrt{\lambda_1}=ic\pi/2$ and $\sqrt{\lambda_2}=id\pi/2$ for some $c,d \in \Z$ with $c = d \; \mathrm{mod} \, 2$; that is, it can be written in the form \eqref{eqn:solution_quaternionic_curves} with $c=d \; \mathrm{mod} \, 2$. Moreover, such a solution lies in $\St$ if and only if $c$ and $d$ are not both zero  and 
$$
\left\{\begin{array}{cl}
|\xi_1|^2+|\xi_2|^2 = |\zeta_1|^2 + |\zeta_2|^2 = 1 & \mbox{ if } c \neq \pm d \\
(\xi_1,\xi_2) \mbox{ and } (\zeta_1, \zeta_2) \mbox{ are orthonormal in $\C^2$} & \mbox{ if } c= d \\
(\xi_1,\xi_2) \mbox{ and } (\zeta_2,\zeta_1) \mbox{ are orthonormal in $\C^2$} & \mbox{ if } c=-d.  \end{array}\right.
$$
\smallskip

The first statement of the claim is clear. Moreover, it is also clear that if $c=d=0$, then the  solution cannot lie in $\St$. Consider $q(c,d,\xi_1,\xi_2,\zeta_1,\zeta_2)=q=(z,w)$ with $c=d \; \mathrm{mod} \, 2$ and $c,d$ not both zero. Using $L^2$-orthonormality of the functions $\mathrm{e}(k)$, $k \in \Z$, one is able to show that 
$$
\|z\|^2_{L^2}=|\xi_1|^2 + |\xi_2|^2, \;\;\;\; \|w\|^2_{L^2} = |\zeta_1|^2 + |\zeta_2|^2 \;\; \mbox{and} \;\; \left<z,w\right>_{L^2} = \left\{
\begin{array}{cl}
0 & \mbox{ if } c \neq \pm d \\
\left<(\xi_1,\xi_2),(\zeta_1,\zeta_2)\right>_{\C^2} & \mbox{ if } c= d \\
\left<(\xi_1,\xi_2),(\zeta_2,\zeta_1)\right>_{\C^2}& \mbox{ if } c=-d. 
\end{array}\right. 
$$
Therefore the solution $q$ lies in $\St$ if and only if the conditions of the claim are satisfied.

\smallskip
\noindent \textbf{Claim 2.} The set $\mathfrak{Q}_1$ is a $\mathrm{U}(2)$-cross-section of the set $\widetilde{\mathfrak{Q}}_1=\{q(c,\pm c,\xi_1,\xi_2,\zeta_1,\zeta_2) \mid c \neq 0\} \cap \St$.
\smallskip

First note that, by renaming parameters, an arbitrary element of $\widetilde{\mathfrak{Q}}_1$ can always be written in the form $q=q(c,c,\xi_1,\xi_2,\zeta_1,\zeta_2)$. Now the claim follows by the fact that for any such $q$, there is a unique $A \in \mathrm{U}(2)$ such that $q=(\mathrm{e}(c),\mathrm{e}(-c))\cdot A$; namely,
$$
q=(\mathrm{e}(c),\mathrm{e}(-c)) \cdot \left(\begin{array}{cc}
\xi_1 & \zeta_1 \\
\xi_2 & \zeta_2
\end{array}\right).
$$
It is also easy to see that $(\mathrm{e}(c),\mathrm{e}(-c)) \cdot A \in \mathfrak{Q}_1$ if and only $A$ is the identity. This completes the proof of Claim 2.

\smallskip
\noindent \textbf{Claim 3.} The set $\mathfrak{Q}_2$ is a $\mathrm{U}(2)$-cross-section of the set $\widetilde{\mathfrak{Q}}_2=\{q(c,d,\xi_1,\xi_2,\zeta_1,\zeta_2) \mid c \neq \pm d \} \cap \St$.
\smallskip

Renaming parameters, we can assume that $c, d \geq 0$. We first  show that if $q(c,d,\xi_1,\xi_2,\zeta_1,\zeta_2)=q=(z,w) \in \mathfrak{Q}_2$ is taken to $\mathfrak{Q}_2$ by some $A \in \mathrm{U}(2)$, then $A$ is the identity matrix.  Writing
$$
A=e^{i\theta} \left(\begin{array}{cc}
u & v \\
-\overline{v} & \overline{u}\end{array}\right), 
$$
we see that the first coordinate of $q \cdot A$ is given by 
$$
e^{i\theta}\left(u(\xi_1 \mathrm{e}(c) + \xi_2 \mathrm{e}(-c)) - \overline{v}(\zeta_1 \mathrm{e}(d) + \zeta_2 (\mathrm{e}(-d)))\right).
$$
It follows by the $L^2$-orthonormality of the functions $\mathrm{e}(k)$ that for this to be the first coordinate of an element of $\mathfrak{Q}_2$, it must be that $u=0$ or $v=0$. The $u=0$ case is ruled out, as this would imply that $A$ switches the entries of $q$ and this contradicts condition (2) in the definition of $\mathfrak{Q}_2$. If $v=0$, then condition (3) implies that $e^{i\theta} u$ is a positive real number and condition (4) implies that $|e^{i\theta} u|=1$. It follows that $A$ is the identity matrix.

It is therefore sufficient to show that for any find any $q(c,d,\xi_1,\xi_2,\zeta_1,\zeta_2)=q=(z,w) \in \widetilde{\mathfrak{Q}}_2$, there exists $A \in \mathrm{U}(2)$ such that $q \cdot A \in \mathfrak{Q}_2$. Assume without loss of generality (by renaming parameters as necessary) that $c,d \geq 0$. We arrange that $c > d \geq 0$ by switching the coordinate functions of $q$ if necessary, and this corresponds to multiplying by an element of $\mathrm{U}(2)$. If $\xi_1 = 0$ then we are done and otherwise we take the curve to $\mathfrak{Q}_2$ by multiplying by the matrix 
$$
\left(\begin{array}{cc}
\frac{\overline{\xi_1}}{|\xi_1|} & 0 \\
0 & \frac{\xi_1}{|\xi_1|}\end{array}\right) \in \mathrm{U}(2).
$$
This completes the proof of Claim 3 and of the theorem.
\end{proof}

\subsection{Energy Levels for $\mathrm{E}$}

Using Theorem \ref{thm:E_crit_points_parameterizations}, we are able to calculate the possible energy levels that a framed curve can realize. For $q=(z,w) \in \mathfrak{Q}$,
\begin{align*}
|z'(t)|^2 &= \left| \frac{i\pi c}{2} \xi_1 \mathrm{e}(c)\right|^2 + \left|\frac{-i\pi c}{2} \xi_2 \mathrm{e}(-c)\right|^2 +2 \mathrm{Re}\left(\frac{i\pi c}{2} \xi_1 \mathrm{e}(c) \overline{\left(\frac{-i\pi c}{2} \xi_2 \mathrm{e}(-c)\right)}\right)\\
&=\left(\frac{\pi c}{2}\right)^2 - 2\left(\frac{\pi c}{2}\right)^2 \mathrm{Re}\left(\xi_1\overline{\xi_2}\mathrm{e}(2c) \right),
\end{align*}
where we have used $|\xi_1|^2+|\xi_2|^2=1$. Similarly,
$$
|w'(t)|^2=\left(\frac{\pi d}{2}\right)^2 - 2\left(\frac{\pi d}{2}\right)^2 \mathrm{Re}\left(\zeta_1\overline{\zeta_2}\mathrm{e}(2d)\right).
$$
Recalling that $I=[0,2]$ and that $\mathrm{e}(k)$ denotes the function $t \mapsto \exp(i\pi k t/2)$, we have
\begin{align*}
\mathrm{E}(q) &= \int_I |z'|^2 + |w'|^2 \; \mathrm{d}t =\int_I \left(\frac{\pi c}{2}\right)^2 - 2\left(\frac{\pi c}{2}\right)^2 \mathrm{Re}\left(\xi_1\overline{\xi_2}\mathrm{e}(2c) \right) + \left(\frac{\pi d}{2}\right)^2 - 2\left(\frac{\pi d}{2}\right)^2 \mathrm{Re}\left(\zeta_1\overline{\zeta_2}\mathrm{e}(2d)\right) \; \mathrm{d}t \\
&=\frac{\pi^2(c^2+d^2)}{2} - 2\left(\frac{\pi c}{2}\right)^2 \mathrm{Re}\left(  \xi_1\overline{\xi_2} \int_I \mathrm{e}(2c) \; \mathrm{d}t\right)- 2\left(\frac{\pi d}{2}\right)^2 \mathrm{Re}  \left(\zeta_1\overline{\zeta_2} \int_I \mathrm{e}(2d) \; \mathrm{d}t\right) =\frac{\pi^2(c^2+d^2)}{2}.
\end{align*}
We have proved:

\begin{cor}
The possible critical energy levels of $\mathrm{E}_\mathcal{M}$ are $\pi^2(c^2+d^2)/2$ for any integers $c$ and $d$ which satisfy $c=d \; \mathrm{mod} \, 2$ and are not both zero.
\end{cor}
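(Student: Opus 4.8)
The plan is to compute the energy directly from the parameterizations in $\mathfrak{Q}$ provided by Theorem \ref{thm:E_crit_points_parameterizations}, since Corollary \ref{cor:energy_functional_quaternionic} tells us that the energy of a critical point $q=(z,w)$ is $\mathrm{E}(q)=4\int_I |q'|^2\,\mathrm{d}t = 4\int_I |z'|^2 + |w'|^2\,\mathrm{d}t$ (and after passing to the Grassmannian energy \eqref{eqn:grassmannian_energy} the factor of $4$ is absorbed, so we are really integrating $|q'|^2$). Every element of $\mathfrak{Q}$ has the form \eqref{eqn:solution_quaternionic_curves}, so $z$ and $w$ are finite Fourier sums of the functions $\mathrm{e}(\pm c)$ and $\mathrm{e}(\pm d)$, whose derivatives are scalar multiples of themselves. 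The whole computation thus reduces to evaluating a handful of $L^2$-integrals of products of exponentials over $I=[0,2]$.

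First I would differentiate term by term, using that $\frac{d}{dt}\mathrm{e}(\lambda) = \frac{i\pi\lambda}{2}\mathrm{e}(\lambda)$, to get $z' = \frac{i\pi c}{2}(\xi_1\mathrm{e}(c) - \xi_2\mathrm{e}(-c))$ and the analogous expression for $w'$. Expanding $|z'|^2 = z'\overline{z'}$ produces three groups of terms: the two diagonal terms give $(\pi c/2)^2(|\xi_1|^2 + |\xi_2|^2)$, which collapses to $(\pi c/2)^2$ by the normalization $|\xi_1|^2+|\xi_2|^2=1$ enforced in the definition of $\mathfrak{Q}$ (this is exactly where conditions (3)--(4) and the $\mathfrak{Q}_1$-normalization are used), and the cross term contributes $-2(\pi c/2)^2\,\mathrm{Re}(\xi_1\overline{\xi_2}\mathrm{e}(2c))$. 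This is precisely the displayed expression for $|z'(t)|^2$ in the paragraph above, and the corresponding formula for $|w'(t)|^2$ follows identically with $(c,\xi_1,\xi_2)$ replaced by $(d,\zeta_1,\zeta_2)$.

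Next I would integrate over $I$. The two constant terms integrate to $2(\pi c/2)^2 + 2(\pi d/2)^2 = \pi^2(c^2+d^2)/2$. The only remaining task is to show that the two cross terms integrate to zero, which reduces to evaluating $\int_0^2 \mathrm{e}(2c)\,\mathrm{d}t = \int_0^2 \exp(i\pi c t)\,\mathrm{d}t$. For any nonzero integer $c$ this vanishes because $\exp(i\pi c t)$ is periodic with integer period dividing $2$; and when $c=0$ the prefactor $(\pi c/2)^2$ already kills the term. The same argument disposes of the $\zeta$ cross term. Hence $\mathrm{E}(q)=\pi^2(c^2+d^2)/2$, and combining with the constraints on $(c,d)$ built into $\mathfrak{Q}$ (namely $c=d\bmod 2$ and not both zero, inherited from Claim 1) yields exactly the stated list of critical energy levels.

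I do not anticipate a genuine obstacle here, as the result is a direct corollary of the explicit classification already in hand; the only point requiring care is bookkeeping the normalization hypotheses. In particular I would double-check that for elements of $\mathfrak{Q}_1$ (where $\xi_2=\zeta_1=0$) the cross terms vanish trivially and the normalization $|\xi_1|=|\zeta_2|=1$ still gives $\mathrm{E}=\pi^2(c^2+c^2)/2=\pi^2 c^2$, consistent with setting $d=c$ in the general formula. The one subtlety worth flagging is that the formula $\mathrm{E}(q)=\int_I|z'|^2+|w'|^2\,\mathrm{d}t$ silently uses the factor-$4$ normalization of Corollary \ref{cor:energy_functional_quaternionic} together with the length-$2$ convention from Lemma \ref{lem:fundamental_lemma}(b); I would state the convention explicitly so the reader can trace where the $4$ went.
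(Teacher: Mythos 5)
Your proposal is correct and follows essentially the same route as the paper: differentiate the explicit Fourier parameterizations in $\mathfrak{Q}$, use the normalization $|\xi_1|^2+|\xi_2|^2=|\zeta_1|^2+|\zeta_2|^2=1$ to collapse the diagonal terms, and kill the cross terms via $\int_I \mathrm{e}(2c)\,\mathrm{d}t=0$ for $c\neq 0$. Your explicit handling of the $c=0$ prefactor and of the factor-of-$4$ convention (the paper silently works with the Grassmannian energy \eqref{eqn:grassmannian_energy} rather than the normalization of Corollary \ref{cor:energy_functional_quaternionic}) is slightly more careful bookkeeping than the paper's, but the argument is the same.
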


\subsection{Isolated Critical Points}

Theorem \ref{thm:E_crit_points_parameterizations} implies that the isolated critical points of $\mathrm{E}_\mathcal{M}$ are exactly those corresponding to elements of the set $\mathfrak{Q}_1$. Straightforward calculations using the formulas for the geometric invariants given in Lemma \ref{lem:darboux_curvatures} show that the periodic parameterized framed curve corresponding to $q(c,c,1,0,0,1) \in \mathfrak{Q}_1$ has $\kappa_1 = \frac{\pi c}{2}$ and $\kappa_2=\mathrm{tw}=\mathrm{st}=0$.  We conclude that the isolated critical points of $\mathrm{E}_\mathcal{M}$ are multiply-covered, arclength-parameterized, untwisted round circles.

\subsection{One Parameter Families of Critical Points}

Let $c$ and $d$ be integers which are not both zero. We will consider the simple 1-parameter family $q_u=q(c,d,u,\sqrt{1-u^2},1,0)$ of elements of $\widetilde{\mathfrak{Q}}_1$, whose $\mathrm{U}(2)$-orbits map to critical points of $\mathrm{E}_\mathcal{M}$ under the frame-Hopf map. It will be convenient to introduce the change of variables
$$
h:=\frac{c+d}{2} \;\;\; \mbox{ and } \;\;\; k:=\frac{c-d}{2}.
$$
We will also adopt the shorthand notations $\mathrm{c}(\lambda)$ and $\mathrm{s}(\lambda)$ for the functions $t \mapsto \cos(\lambda \pi t)$ and $t \mapsto \sin(\lambda \pi t)$, respectively. Then, up to a translation, $q_u$ maps to the framed loop $(\gamma_u,V_u)$, where
\begin{equation}\label{eqn:explicit_gamma_u}
\gamma_u = \frac{2}{\pi}\left(\frac{u \sqrt{1-u^2}}{h+k} \mathrm{s}(h+k),  -\frac{u}{k} \mathrm{c}(k) + \frac{\sqrt{1-u^2}}{h} \mathrm{c}(h),  \frac{u}{k} \mathrm{s}(k) + \frac{\sqrt{1-u^2}}{h} \mathrm{s}(h) \right)
\end{equation}
and
$$
V_u(t) = \frac{2}{|q_u|^2} \left(u\mathrm{s}(h) - \sqrt{1-u^2} \mathrm{s}(k), \mathrm{c}(h)\mathrm{c}(k)+ u\sqrt{1-u^2},(1-u^2) \mathrm{s}(h)\mathrm{c}(k) - u^2 \mathrm{c}(h)\mathrm{s}(k)\right),
$$

with
$$
|q_u|^2 = 2 + 2u\sqrt{1-u^2} \mathrm{c}(h+k).
$$

In particular,
$$
\gamma_0(t)=\frac{2}{\pi h} \left(0, \mathrm{c}(h), \mathrm{s}(h) \right) \;\; \mbox{ and } \;\;  V_0(t)= \mathrm{c}(k)\left(0, \mathrm{c}(h), \mathrm{s}(h) \right) + \mathrm{s}(k) \left(1,0,0\right).
$$
Clearly $\gamma_0$ is an arclength-parameterized $|h|$-times-covered round circle. Moreover, the linking number of $\gamma_0$ and $\gamma_0 + \epsilon V_0$ (for $\epsilon$ sufficiently small) is $k$. Similarly,
$$
\gamma_1(t)=\frac{2}{\pi k} \left(0,-\mathrm{c}(k),\mathrm{s}(k)\right) \;\; \mbox{ and } \;\; V_1(t)=.-\mathrm{c}(h)\left(0,-\mathrm{c}(k),\mathrm{s}(k)\right) + \mathrm{s}(h)\left(1,0,0\right),
$$
so that $(\gamma_1,V_1)$ is an arclength-parameterized, $|k|$-covered round circle whose image is linked $-h$ times by $\gamma_1+ \epsilon V_1$.

It remains to determine what happens between the endpoints of the one-parameter family. We first show that the base curve $\gamma_u$ is nonembedded for exactly one value of $u \in (0,1)$.

\begin{cor}
For the 1-parameter family of critical points described above, the only $u \in (0,1)$ for which the base curve $\gamma_u$ is nonembedded is
$$
u= \sqrt{\frac{k^2}{h^2+k^2}}.
$$
\end{cor}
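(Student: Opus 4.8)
Throughout I assume $h$ and $k$ are positive and coprime, which is the regime relevant for knotting (and matches the normalization $c>d\ge 0$ of $\mathfrak{Q}_2$); if $\gcd(h,k)=g\ge 2$ the base curve has period $2/g$, hence is a $g$-fold cover and is nonembedded for every $u$. The plan is to convert nonembeddedness of $\gamma_u$ into a solvable trigonometric system. By the frame-Hopf formula \eqref{eqn:hopf_map_complex_coords}, for $q_u=(z_u,w_u)$ and $t_0<t_1$ in $I=[0,2]$ we have $\gamma_u(t_0)=\gamma_u(t_1)$ exactly when $\int_{t_0}^{t_1}(|z_u|^2-|w_u|^2)\,\mathrm{d}t=0$ and $\int_{t_0}^{t_1}z_u\overline{w_u}\,\mathrm{d}t=0$, which are precisely the conditions \eqref{eqn:self_intersection_condition} of Lemma \ref{lem:fundamental_lemma}(a); so here a self-intersection of $\gamma_u$ is equivalent to a self-intersection of the framed curve. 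Writing $b=\sqrt{1-u^2}$ and using $z_u=u\,\mathrm{e}(c)+b\,\mathrm{e}(-c)$, $w_u=\mathrm{e}(d)$ together with $c+d=2h$ and $c-d=2k$, a short computation gives $|z_u|^2-|w_u|^2=2ub\cos(\pi c t)$ and $z_u\overline{w_u}=u\,e^{i\pi k t}+b\,e^{-i\pi h t}$.

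Next I would evaluate the two integrals. Since $u\in(0,1)$ makes $ub\ne 0$, the first condition becomes simply $\sin(\pi c t_1)=\sin(\pi c t_0)$, and the second becomes $\int_{t_0}^{t_1}(u\,e^{i\pi k t}+b\,e^{-i\pi h t})\,\mathrm{d}t=0$. Introducing $\sigma=(t_0+t_1)/2$ and $\delta=(t_1-t_0)/2\in(0,1)$ and applying the identities $\sin\alpha-\sin\beta=2\cos\tfrac{\alpha+\beta}{2}\sin\tfrac{\alpha-\beta}{2}$ and $e^{i\alpha}-e^{i\beta}=2i\sin\tfrac{\alpha-\beta}{2}\,e^{i(\alpha+\beta)/2}$, the first condition reads $\cos(\pi c\sigma)\sin(\pi c\delta)=0$, while the second, after clearing $e^{i\pi h\sigma}$ and using $h+k=c$, reads $P\,e^{i\pi c\sigma}=Q$ with the real scalars $P=\tfrac{u}{k}\sin(\pi k\delta)$ and $Q=-\tfrac{b}{h}\sin(\pi h\delta)$.

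The heart of the argument is the case analysis. Because $P$ and $Q$ are real, $P\,e^{i\pi c\sigma}=Q$ forces either $P=Q=0$, or $P\ne 0$ together with $\sin(\pi c\sigma)=0$ and $|P|=|Q|$. In the first case $\sin(\pi k\delta)=\sin(\pi h\delta)=0$ gives $k\delta,h\delta\in\Z$, hence $\delta\in\tfrac{1}{\gcd(h,k)}\Z=\Z$, impossible for $\delta\in(0,1)$; coprimality thus eliminates this case. In the second case $\cos(\pi c\sigma)=\pm1\ne 0$, so the first condition forces $\sin(\pi c\delta)=0$, i.e.\ $\delta=m/c$ with $0<m<c$. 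The key observation is that then $(h+k)\delta=m\in\Z$, so $\sin(\pi h\delta)=\pm\sin(\pi k\delta)$, and both are nonzero because $\gcd(k,c)=\gcd(k,h)=1$ forces $c\nmid km$. Consequently $|P|=|Q|$ collapses to $u/k=b/h$, which with $u^2+b^2=1$ yields exactly $u=\sqrt{k^2/(h^2+k^2)}=:u^\ast$. For the converse I would exhibit a concrete intersection at this value: taking $m=1$, $\delta=\sigma=1/c$ gives $t_0=0$, $t_1=2/c\in(0,2]$, and the sign condition $P\cos(\pi c\sigma)=Q$ is verified using $u/k=b/h$ and $\sin(\pi h/c)=\sin(\pi k/c)$, so $\gamma_{u^\ast}(0)=\gamma_{u^\ast}(2/c)$.

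I expect the main obstacle to be the trigonometric bookkeeping in the case analysis—specifically, extracting the identity $\sin(\pi h\delta)=\pm\sin(\pi k\delta)$ (valid precisely because $(h+k)\delta\in\Z$) that makes the magnitude equation collapse to a single relation in $u$, and confirming that the complementary cases contribute no spurious self-intersections. Care is also needed with small or mixed-sign values of $c=h+k$ (for instance $|c|=1$, which can occur when $h,k$ have opposite signs and would produce no self-intersection for any $u$), which is why the positivity and coprimality hypotheses on $h,k$ are imposed.
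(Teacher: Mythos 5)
Your proof is correct in the regime you work in (positive coprime $h,k$), and it starts from the same reduction as the paper---both arguments convert nonembeddedness of $\gamma_u$ into the vanishing of $\int_{t_0}^{t_1}(|z_u|^2-|w_u|^2)\,\mathrm{d}t$ and $\int_{t_0}^{t_1}z_u\overline{w_u}\,\mathrm{d}t$ and evaluate the same integrals---but the execution differs, and yours is both cleaner and more complete. The paper constrains $t_0,t_1$ via the first coordinate of \eqref{eqn:explicit_gamma_u}, takes squared magnitudes of the complex equation, proves $\cos((t_0-t_1)k\pi)=\cos((t_0-t_1)h\pi)$ in the case $t_1=t_0+2j/(h+k)$, and then cancels the common factor to get $h^2u^2=k^2(1-u^2)$. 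Your substitution $\sigma,\delta$ and the reduction to $Pe^{i\pi c\sigma}=Q$ with $P,Q$ real buys three things that the paper's write-up lacks. First, the paper's second case $t_1=(2j+1)/(h+k)-t_0$ forces $\cos(\pi c\sigma)=0$, hence $e^{i\pi c\sigma}=\pm i$ and $P=Q=0$; there the magnitude equation degenerates to $0=0$ and says nothing about $u$, so this case does \emph{not} ``follow similarly''---it must be eliminated outright, which is exactly what your Bezout argument from $\gcd(h,k)=1$ does. Second, cancelling the factor $2-2\cos((t_0-t_1)k\pi)$ requires knowing it is nonzero; the paper never checks this, whereas your verification that $\gcd(k,h+k)=1$ and $0<m<c$ force $\sin(\pi k\delta)\neq0$ supplies the missing step (and makes explicit that the corollary needs a coprimality hypothesis, which the paper leaves implicit). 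Third, you prove existence: the explicit self-intersection $\gamma_{u^\ast}(0)=\gamma_{u^\ast}(2/c)$ appears nowhere in the paper's proof, which only shows that no $u$ other than $u^\ast$ can work---yet the word ``only'' in the statement, and item (iii) of Theorem \ref{thm:1_param_families}, presuppose it.

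The one respect in which your proof is narrower is the sign restriction: the family is defined for arbitrary nonzero $h,k$ satisfying the gcd conditions, and the paper's Figure 1 uses $(h,k)=(2,-5)$. Your argument extends to mixed signs essentially verbatim, since $\sin(\pi k\delta)/k$ is an even function of $k$, so $P$ and $Q$ depend only on $|h|,|k|$ and the whole case analysis goes through whenever $|h+k|\geq2$. Moreover, your remark about $|h+k|=1$ is not a defect but a genuine boundary phenomenon: for $(h,k)=(2,-1)$, say, one checks that $\gamma_u$ is embedded for \emph{every} $u\in(0,1)$, so the existence half of the claim (and Theorem \ref{thm:1_param_families}(iii)) actually fails there---something the paper's necessity-only proof cannot detect.
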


\begin{proof}
Assume that $\gamma_u$ self intersects at some $u$ at parameter values $t_0< t_1$. The 1-parameter family is given in complex coordinates by 
$$
(z_u,w_u) = (u \mathrm{e}(c)+ \sqrt{1-u^2} \mathrm{e}(-c), \mathrm{e}(d)),
$$
so that the condition
$$
 \int_{t_0}^{t_1} z_u \overline{w_u} \; \mathrm{d}t = 0
$$
from Lemma \ref{lem:fundamental_lemma} reads
\begin{align*}
0&= \int_{t_0}^{t_1} u \mathrm{e}(c) \mathrm{e}(-d) + \sqrt{1-u^2}\mathrm{e}(-c) \mathrm{e}(-d) \; \mathrm{d}t = \int_{t_0}^{t_1} u \mathrm{e}(2k) + \sqrt{1-u^2}\mathrm{e}(-2h) \; \mathrm{d}t  \\
&=\frac{u}{ik\pi} \left(\mathrm{e}(2kt_1)-\mathrm{e}(2kt_0) \right) +  \frac{\sqrt{1-u^2}}{-ih\pi} \left(\mathrm{e}(-2ht_1) - \mathrm{e}(-2ht_0)\right).
\end{align*}
Equivalently, 
$$
hu\left(\mathrm{e}(2kt_1)-\mathrm{e}(2kt_0) \right) = k\sqrt{1-u^2} \left(\mathrm{e}(-2ht_1) - \mathrm{e}(-2ht_0)\right).
$$
Taking the squared magnitude of each side yields
$$
h^2u^2(2-2\mathrm{Re}\,\mathrm{e}(2k(t_0-t_1))) = k^2(1-u^2)(2-2\mathrm{Re}\,\mathrm{e}(-2h(t_0-t_1))).
$$

We claim that $\mathrm{Re}\,\mathrm{e}(2k(t_0-t_1))=\mathrm{Re}\,\mathrm{e}(-2h(t_0-t_1))$, or equivalently that $\cos((t_0-t_1)k\pi)=\cos((t_0-t_1)h\pi)$.  In this case we are done, as
$$
h^2u^2=k^2(1-u^2) \; \Rightarrow \; u= \sqrt{\frac{k^2}{h^2+k^2}}.
$$
From the first coordinate of the explicit parameterization \eqref{eqn:explicit_gamma_u} of $\gamma_u$, we see that for $\gamma_u$ to have a self-intersection at parameter values $t_0 < t_1$ it must be that $\sin((h+k)\pi t_0)=\sin((h+k) \pi t_1)$, so a necessary condition is that
$$
t_1=\left\{\begin{array}{l}
\displaystyle t_0+\frac{2j}{h+k}\;\; \mbox{ or }\\
 \\
\displaystyle \frac{2j+1}{h+k} -t_0, \end{array}\right. \;\;\; \mbox{ for } j=0,1,\ldots,h+k-1.
$$
Taking $t_1=t_0+2j/(h+k)$, we have
\begin{align*}
\cos((t_0-t_1)k\pi)&= \cos\left(\frac{2j}{h+k} k \pi\right) = \cos\left(\frac{2j}{h+k} ((h+k)-h) \pi\right) \\
&=\cos\left(2j\pi - \frac{2j}{h+k}h\pi\right) = \cos\left(\frac{2j}{h+k}h\pi\right) \\
&= \cos((t_0-t_1)h\pi).
\end{align*}
The case $t_1=(2j+1)/(h+k)-t_0$ follows similarly.
\end{proof}

\begin{lem}\label{lem:small_u_torus_knot}
Assume that $h$ and $h+k$ are relatively prime. Then for $0 < u < \sqrt{k^2/(h^2+k^2)} $, $\gamma_u$ parameterizes an $(h,h+k)$-torus knot.
\end{lem}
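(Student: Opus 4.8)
The plan is to realize $\gamma_u$ as the image of the standard $(h,h+k)$ curve on an embedded torus, and then to pin down the knot type at a single convenient value of $u$, propagating it across the whole range via the isotopy invariance supplied by the preceding corollary. Writing $s = \pi t$, $a=u$, $b = \sqrt{1-u^2}$ and regrouping the explicit formula \eqref{eqn:explicit_gamma_u} (we may assume $h,k \neq 0$, the interval being empty when $k=0$ and the parameterization presupposing $h \neq 0$), a short computation records $\gamma_u$ in the coordinates $\R \times \C = \R^3$, where the second slot is $Y+iZ$, as
\begin{equation*}
\gamma_u(s) = \left(\frac{2ab}{\pi(h+k)}\sin\big((h+k)s\big),\; \frac{2b}{\pi h}\, e^{ihs}\big(1 - \rho\, e^{-i(h+k)s}\big)\right), \qquad \rho := \frac{ah}{bk}.
\end{equation*}
The hypothesis $0<u<\sqrt{k^2/(h^2+k^2)}$ is \emph{exactly} the condition $|\rho|<1$. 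This structure suggests introducing the map $\Psi:(\R/2\pi\Z)^2 \to \R^3$,
\begin{equation*}
\Psi(\sigma,\tau) = \left(\frac{2ab}{\pi(h+k)}\sin\tau,\; \frac{2b}{\pi h}\, e^{i\sigma}\big(1-\rho\, e^{-i\tau}\big)\right),
\end{equation*}
so that $\gamma_u(s) = \Psi(hs,(h+k)s)$. Since $\gcd(h,h+k)=1$, the curve $s \mapsto (hs,(h+k)s)$ is a simple closed $(h,h+k)$ curve on the abstract torus, winding $h$ times in $\sigma$ and $h+k$ times in $\tau$. Hence, once $\Psi$ is known to embed $(\R/2\pi\Z)^2$ as an \emph{unknotted} torus with the $\tau$-circles as meridians and the $\sigma$-circles as longitudes, $\gamma_u$ is forced to be the $(h,h+k)$-torus knot.

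First I would check that $\Psi$ is injective whenever $|\rho|<1$: equality of the $\R$-coordinates gives $\sin\tau_1=\sin\tau_2$, equality of the moduli (using $|1-\rho e^{-i\tau}|^2 = 1 - 2\rho\cos\tau + \rho^2$) gives $\cos\tau_1=\cos\tau_2$, hence $\tau_1=\tau_2$, after which $1-\rho e^{-i\tau}\neq 0$ forces $e^{i\sigma_1}=e^{i\sigma_2}$. To identify the torus up to isotopy I would then work at \emph{small} $u$. As $u\to 0^+$ both amplitudes above vanish like $u$ while their ratio tends to the constant $k/(h+k)$, so $\Psi$ presents the torus as a thin tube about the round circle $C_0 = \{(0,\tfrac{2b}{\pi h}e^{i\sigma})\}$, an unknot. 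Decomposing the displacement $\Psi(\sigma,\tau)-C_0(\sigma)$ in the frame $(\hat X,\hat n,\hat t)$ of $C_0$, with $\hat n=e^{i\sigma}$ the outward normal and $\hat t = ie^{i\sigma}$ the tangent, its projection to the normal plane traces the nondegenerate ellipse $\tau \mapsto \big(\tfrac{2ab}{\pi(h+k)}\sin\tau,\,-\tfrac{2a}{\pi k}\cos\tau\big)$, winding \emph{once} about $C_0(\sigma)$, while the tangential shear $\tfrac{2a}{\pi k}\sin\tau$ is $O(u)$. A normal-exponential/tubular-neighborhood estimate then shows that for all sufficiently small $u>0$, $\Psi$ is an embedded tubular torus about the unknot $C_0$, with the $\tau$-circles bounding meridian disks and the $\sigma$-circles longitudinal; therefore $\gamma_u$ is the $(h,h+k)$-torus knot for small $u$.

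Finally I would promote this from small $u$ to the whole range. By the previous corollary, $\gamma_u$ is embedded for every $u \in \big(0,\sqrt{k^2/(h^2+k^2)}\big)$ (it is automatically immersed, since $q_u \in \mathrm{St}_2^\circ(\mathcal{V})$ gives $|\gamma_u'|=|q_u|^2>0$), and it varies smoothly in $u$; thus $\{\gamma_u\}$ is an isotopy of knots and the knot type is \emph{constant} on the interval. Combining this with the previous paragraph yields the claim. I expect the main obstacle to be the middle step, namely verifying rigorously that the sheared thin tube is genuinely embedded and unknotted with the asserted meridian. The reduction to small $u$ is precisely what makes this tractable: it allows the tubular-neighborhood estimate rather than a direct control of $\Psi$ (or of a solid-torus filling of it) as $\rho \to 1$, where the tube pinches onto $C_0$ and injectivity becomes delicate. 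It is worth noting that the threshold $|\rho|=1$ is exactly where the $\hat n$-semiaxis $\tfrac{2a}{\pi k}$ of the normal ellipse reaches the radius $\tfrac{2b}{\pi h}$ of $C_0$, matching the unique self-intersection located in the previous corollary.
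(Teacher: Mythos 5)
Your proposal is correct, and although it follows the same global strategy as the paper---identify the knot type at small $u$, then propagate it across the interval---the key device is genuinely different. The paper never places $\gamma_u$ itself on a torus: it introduces an auxiliary curve $\widetilde{\gamma}_u$, obtained from $\gamma_u$ by deleting precisely your tangential shear term, observes that $\widetilde{\gamma}_u$ is a manifestly standard $(h,h+k)$-curve on a torus of revolution with elliptical cross-sections, and then isotopes $\gamma_u$ to $\widetilde{\gamma}_u$ by a straight-line homotopy for small $u$. The underlying algebra is identical---the paper's trigonometric identities $\mathrm{c}(h+k)\mathrm{c}(h)=\mathrm{c}(k)-\mathrm{s}(h)\mathrm{s}(h+k)$, etc., are exactly your factorization $e^{ihs}\bigl(1-\rho e^{-i(h+k)s}\bigr)$---but the packaging differs, and each buys something. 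The paper's torus of revolution is standard by inspection, so no meridian/longitude identification or embedding check is needed, which keeps the proof short. Your version keeps $\gamma_u$ fixed and exactly on the torus $\Psi$, and your injectivity computation, valid on the entire range $|\rho|<1$, explains the threshold conceptually: at $|\rho|=1$ the factor $1-\rho e^{-i\tau}$ acquires a zero, the torus pinches onto the axis, which is exactly where the preceding corollary locates the self-intersection. You are also more careful about a point the paper leaves implicit: its proof as written only concludes for ``small $u>0$,'' and the extension to all $u<\sqrt{k^2/(h^2+k^2)}$ requires precisely the argument you spell out (embeddedness from the corollary, immersedness from $|\gamma_u'|=|q_u|^2>0$, smooth dependence on $u$, isotopy extension). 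Finally, both proofs rest on an asserted-but-not-detailed small-$u$ estimate (your tubular-neighborhood step; the paper's claim that the straight-line homotopy is an isotopy), so your level of rigor matches the original.
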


\begin{proof}
The claim is clear by inspection upon plotting the explicitly parameterized $\gamma_u$. To prove it analytically, we consider the curve $\widetilde{\gamma}_u$ with parameterization

\begin{align*}
&\widetilde{\gamma}_u(t) = \frac{2}{\pi} \left( \frac{u\sqrt{1-u^2}}{h+k} \sin((h+k) \pi t),  \cos(h \pi t) \left(-\frac{u}{k}\cos((h+k) \pi t) + \frac{\sqrt{1-u^2}}{h}\right), \right. \\
&\hspace{3in} \left. \sin(h \pi t) \left(-\frac{u}{k} \cos((h+k) \pi t) + \frac{\sqrt{1-u^2}}{h}\right) \right).
\end{align*}
For $u < 1/2$, $\widetilde{\gamma}_u$ is an $(h,h+k)$-torus knot lying on a torus of revolution with eliptical cross-sections. Simplifying this formula using the trigonometric identities
$$
\mathrm{c}(h+k)\mathrm{c}(h) = \mathrm{c}(k) - \mathrm{s}(h)\mathrm{s}(h+k) \;\; \mbox{ and } \;\; \mathrm{c}(h+k) \mathrm{s}(h) = -\mathrm{s}(k) + \mathrm{c}(h)\mathrm{s}(h+k),
$$
we conclude that
$$
\widetilde{\gamma}_u = \gamma_u + \frac{u}{k} \left(0, \mathrm{s}(h)\mathrm{s}(h+k), \mathrm{c}(h) \mathrm{s}(h+k) \right).
$$
This decomposition can be used to give an isotopy from $\gamma_u$ to $\widetilde{\gamma}_u$ for small $u > 0$, and it follows that $\gamma_u$ is an $(h,h+k)$-torus knot.
\end{proof}

The same method can be used to show:

\begin{lem} 
Assume that $k$ and $h+k$ are relatively prime. Then for $ \sqrt{k^2/(h^2+k^2)} < u <  1$, $\gamma_u$ parameterizes a $(-k,h+k)$-torus knot.
\end{lem}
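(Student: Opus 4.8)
The plan is to mirror the proof of Lemma~\ref{lem:small_u_torus_knot}, but to expand the family around the endpoint $u=1$ rather than $u=0$. At $u=1$ the base curve degenerates to the $|k|$-times-covered circle $\gamma_1 = \frac{2}{\pi k}(0,-\mathrm{c}(k),\mathrm{s}(k))$, so I expect $\gamma_u$ to wind around a $k$-fold core circle for $u$ close to $1$, with the $\sqrt{1-u^2}$-terms in the parameterization~\eqref{eqn:explicit_gamma_u} now playing the role that the $u$-terms played near $u=0$. Accordingly, I would introduce the comparison curve
\[
\widetilde\gamma_u(t) = \frac{2}{\pi}\left(\frac{u\sqrt{1-u^2}}{h+k}\mathrm{s}(h+k),\; -\mathrm{c}(k)\Big(\tfrac{u}{k}-\tfrac{\sqrt{1-u^2}}{h}\mathrm{c}(h+k)\Big),\; \mathrm{s}(k)\Big(\tfrac{u}{k}-\tfrac{\sqrt{1-u^2}}{h}\mathrm{c}(h+k)\Big)\right),
\]
which manifestly lies on a torus of revolution built over the core circle $(-\mathrm{c}(k),\mathrm{s}(k))$: the polar angle of the last two coordinates advances at frequency $k$, while the radial offset and the first coordinate trace an elliptical meridian at frequency $h+k$. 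For $\sqrt{1-u^2}$ small the meridian radius stays bounded away from the core radius $u/k$, so $\widetilde\gamma_u$ is embedded and winds $k$ times longitudinally and $h+k$ times meridionally; since the core circle $(-\mathrm{c}(k),\mathrm{s}(k))$ is negatively oriented, this is a $(-k,h+k)$-torus knot, which is a genuine knot precisely because $\gcd(k,h+k)=1$.

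The second step is to check that $\widetilde\gamma_u$ and $\gamma_u$ differ by a small perturbation. Using the $h\leftrightarrow k$ symmetric analogues of the identities quoted in Lemma~\ref{lem:small_u_torus_knot}, namely
\[
\mathrm{c}(h+k)\mathrm{c}(k) = \mathrm{c}(h) - \mathrm{s}(k)\mathrm{s}(h+k) \quad\text{and}\quad \mathrm{c}(h+k)\mathrm{s}(k) = -\mathrm{s}(h) + \mathrm{c}(k)\mathrm{s}(h+k),
\]
a direct computation should give
\[
\widetilde\gamma_u - \gamma_u = -\frac{\sqrt{1-u^2}}{h}\cdot\frac{2}{\pi}\left(0,\; \mathrm{s}(k)\mathrm{s}(h+k),\; \mathrm{c}(k)\mathrm{s}(h+k)\right),
\]
so that the correction term has magnitude of order $\sqrt{1-u^2}$ and vanishes as $u\to 1$. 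This decomposition furnishes an explicit isotopy from $\gamma_u$ to $\widetilde\gamma_u$ for $u$ sufficiently close to $1$, and therefore identifies $\gamma_u$ as a $(-k,h+k)$-torus knot for such $u$.

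Finally, to upgrade this to the full interval $\sqrt{k^2/(h^2+k^2)} < u < 1$, I would invoke the corollary above, which shows that the unique value of $u\in(0,1)$ at which $\gamma_u$ self-intersects is $u=\sqrt{k^2/(h^2+k^2)}$, the left endpoint of our interval. Hence $u\mapsto\gamma_u$ is an isotopy through embedded curves on the entire open interval, its knot type is constant there, and it equals the $(-k,h+k)$-torus knot computed near $u=1$. The main obstacle I anticipate is bookkeeping rather than conceptual: one must pin down the orientation of the core circle carefully to obtain the sign $-k$ (as opposed to the mirror knot $(k,h+k)$), and one must verify that the elliptical meridian of $\widetilde\gamma_u$ remains embedded over the relevant range of $u$ so that $\widetilde\gamma_u$ is genuinely a torus knot and not merely an immersed curve; both points parallel the ``$u<1/2$'' bound and the orientation check implicit in the proof of Lemma~\ref{lem:small_u_torus_knot}.
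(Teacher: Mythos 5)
Your proposal is correct and takes essentially the same approach as the paper: for this lemma the paper simply remarks that ``the same method'' as Lemma~\ref{lem:small_u_torus_knot} applies, and your mirrored comparison curve, the $h\leftrightarrow k$ angle-subtraction identities, and the resulting perturbation formula $\widetilde\gamma_u-\gamma_u=-\tfrac{\sqrt{1-u^2}}{h}\cdot\tfrac{2}{\pi}\left(0,\mathrm{s}(k)\mathrm{s}(h+k),\mathrm{c}(k)\mathrm{s}(h+k)\right)$ are exactly that method carried out around $u=1$ (and your computation checks out). Your final step---invoking the corollary on the unique nonembedded parameter value to propagate the knot type from a neighborhood of $u=1$ across the whole interval---is the same mechanism the paper relies on implicitly to pass from ``small perturbation'' to the full parameter range, so you have merely made explicit what the paper leaves to the reader.
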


The results of this subsection are summarized as (cf. Theorem \ref{thm:ivey_singer}):

\begin{thm}\label{thm:1_param_families}
Let $h,k$ be integers such that $\mathrm{gcd}(h,h+k)=\mathrm{gcd}(k,h+k)=1$. The critical point set of $\mathrm{E}_\mathcal{M}$ with energy level $\pi (h^2 + k^2)$ contains a 1-parameter family of similarity classes of framed curves $(\gamma_u,V_u)$, $u \in [0,1]$, such that
\begin{itemize}
\item[(i)] $(\gamma_0,V_0)$ is an arclength parameterized $|h|$-times-covered round circle linked $k$-times,
\item[(ii)] $\gamma_u$ is a $(h,h+k)$-torus knot for $0 < u < \sqrt{k^2/(h^2+k^2)}$,
\item[(iii)] $\gamma_u$ is nonembedded for $u=\sqrt{k^2/(h^2+k^2)}$,
\item[(iv)] $\gamma_u$ is a $(-k,h+k)$ torus knot for $ \sqrt{k^2/(h^2+k^2)} < u <  1$,
\item[(v)] $(\gamma_1,V_1)$ is an arclength parameterized $|k|$-times-covered round circle linked $-h$-times.
\end{itemize}
\end{thm}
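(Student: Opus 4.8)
The plan is to assemble the theorem from the explicit computations and component lemmas established above, under the change of variables $c=h+k$, $d=h-k$. First I would verify that the family $q_u=q(c,d,u,\sqrt{1-u^2},1,0)$ lies in $\St$ for every $u \in [0,1]$: its two coordinate vectors have unit $L^2$-norm since $|u|^2+|\sqrt{1-u^2}|^2=1$ and $|1|^2+|0|^2=1$, and in the nondegenerate case $h,k \neq 0$ (so that $c \neq \pm d$) Claim~1 forces their $L^2$-orthogonality automatically. Moreover $w_u=\mathrm{e}(d)$ never vanishes, so $q_u \in \mathrm{St}_2^\circ(\mathcal{V})$ and the frame-Hopf identification of Theorem~\ref{thm:grassmannian} applies. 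By the classification in Theorem~\ref{thm:E_crit_points_parameterizations} the orbit $[q_u]$ is then a critical point of $\mathrm{E}_\mathcal{M}$, and the energy-level computation carried out above gives the common value $\pi^2(c^2+d^2)/2=\pi^2(h^2+k^2)$.

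Next I would dispatch the endpoint claims (i) and (v). The displayed formulas for $(\gamma_0,V_0)$ and $(\gamma_1,V_1)$ exhibit $\gamma_0$ and $\gamma_1$ as arclength-parameterized, $|h|$- and $|k|$-times-covered round circles by inspection, so the only substantive task is the two linking numbers. For this I would form the pushoff $\gamma_0+\epsilon V_0$ and compute its linking number with $\gamma_0$ directly from the trigonometric parameterization---for instance by reading signed crossings off a planar projection or evaluating the Gauss linking integral---obtaining $k$; the computation for $(\gamma_1,V_1)$ is symmetric under interchanging the roles of $h$ and $k$ (up to sign) and yields $-h$.

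The interior statements (ii)--(iv) are where the genuine synthesis happens. Part (iii) is exactly the nonembeddedness Corollary, which singles out $u=\sqrt{k^2/(h^2+k^2)}$ as the unique self-intersecting parameter in $(0,1)$. For (ii) and (iv) the crucial glue is an isotopy-invariance argument: because $\gamma_u$ is embedded for \emph{every} $u$ in each of the open subintervals $\bigl(0,\sqrt{k^2/(h^2+k^2)}\bigr)$ and $\bigl(\sqrt{k^2/(h^2+k^2)},1\bigr)$, the continuous family $u \mapsto \gamma_u$ is a path in the space of embeddings there, so its knot type is locally constant and hence constant on each subinterval. It then suffices to identify that constant value at a single parameter in each piece, which is precisely the content of Lemma~\ref{lem:small_u_torus_knot} (small $u$, giving the $(h,h+k)$-torus knot) and its companion lemma (large $u$, giving the $(-k,h+k)$-torus knot). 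The hypotheses $\gcd(h,h+k)=\gcd(k,h+k)=1$---both equivalent to $\gcd(h,k)=1$---are exactly what those two lemmas require.

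I expect the main obstacle to be the propagation of the knot type across each \emph{full} subinterval rather than the pointwise identification: one must confirm that $u \mapsto \gamma_u$ is genuinely a continuous (indeed smooth) family of embeddings on the open subintervals so that the isotopy-invariance step is legitimate, which is where the nonembeddedness Corollary does its essential work. A secondary but error-prone point is the orientation and sign bookkeeping needed to land on the knot types $(h,h+k)$ and $(-k,h+k)$ and the linking numbers $k$ and $-h$ with the correct signs; the explicit decomposition $\widetilde{\gamma}_u=\gamma_u+\frac{u}{k}\bigl(0,\mathrm{s}(h)\mathrm{s}(h+k),\mathrm{c}(h)\mathrm{s}(h+k)\bigr)$ from Lemma~\ref{lem:small_u_torus_knot} pins these conventions down for small $u$, and the isotopy-invariance argument carries them across the whole interval.
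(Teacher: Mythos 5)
Your proposal is correct and follows essentially the same route as the paper: the same family $q_u=q(h+k,h-k,u,\sqrt{1-u^2},1,0)$, the endpoint identifications for (i) and (v), the nonembeddedness corollary for (iii), and Lemma \ref{lem:small_u_torus_knot} together with its companion lemma for (ii) and (iv). The one place you go beyond the paper's write-up---making explicit that knot type is locally constant along the path of embeddings on each open subinterval, so the identification at small (resp.\ large) $u$ propagates across the whole subinterval---is exactly the step the paper leaves implicit in those lemmas, and your version of it is sound (note also that your computed energy level $\pi^2(h^2+k^2)$ agrees with the paper's energy-level corollary; the factor $\pi$ rather than $\pi^2$ in the theorem statement is evidently a typo).
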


A $1$-parameter family of generalized rods is shown in Figure 1.

\begin{figure}\label{fig:1_param_family}
\centering
\begin{overpic}[width=0.85\textwidth,tics=5]{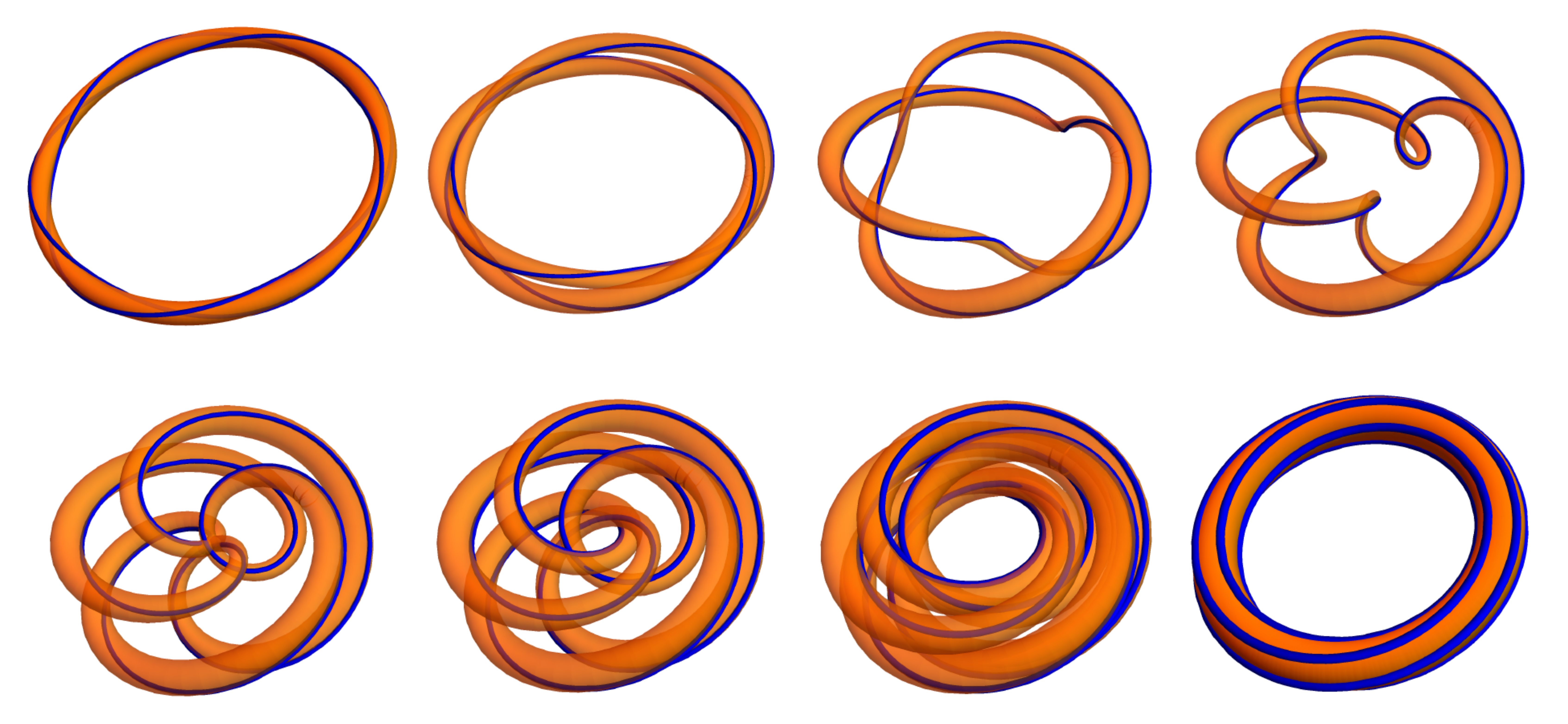}
 \put (9,22) {$u = 0$}
 \put (7,-2) {$u = 0.928$}
  \put (32,22) {$u = 0.15$}
 \put (31,-2) {$u = 0.964$}
   \put (58,22) {$u = 0.5$}
 \put (57,-2) {$u = 0.99$}
    \put (83,22) {$u = 0.8$}
 \put (84,-2) {$u = 1$}
\end{overpic}
\smallskip
\caption{The one-parameter family for parameters $(h,k)=(2,-5)$. Each framed curve is represented as a thickened tube with a line on its surface showing the twisting of the framing. The rods are displayed with variable thickness given by parameterization speed (see Section \ref{sec:generalized_energy_functional}). The pictured framed curves are spaced unevenly along the homotopy to better illustrate features of the evolution.}
\end{figure}

\subsection{Other Knot Types}\label{sec:other_knot_types}

The centerlines of critical points of $\mathrm{E}_\mathcal{M}$ in the $1$-parameter families described in Theorem \ref{thm:1_param_families} exhibit the same topologies as the classical Kirchhoff rods in Theorem \ref{thm:ivey_singer} of Ivey and Singer. On the other hand, the (non-isolated) critical sets of $\mathrm{E}_\mathcal{M}$ are five-dimensional, and one might expect richer topological variation within these sets than in the classical setting. 

Experimenting with parameters, one does find critical points of $\mathrm{E}_\mathcal{M}$ which are not torus knots. For example, the quaternionic curve 
$$
q\left(-3, 5, 0.09, i\sqrt{1-0.09^2}, 0.15,i\sqrt{1-0.15^2}\right)
$$
corresponds, under the frame-Hopf map, to a critical framed curve whose centerline forms the knot $10_{139}^\ast$. The centerline is parameterized as a trigonometric polynomial of degree $5$. Every knot type admits a parameterization as a trigonometric polynomial, and the lowest possible degree of such a parameterization is called the \emph{harmonic index} of the knot type \cite{kauffman, trautwein,trautwein2}. Trautwein shows in \cite{trautwein} that the harmonic index of a knot upper bounds its superbridge index, so this example gives an upper bound of $5$ for the superbridge index of $10_{139}^\ast$. Some of the non-torus knots that we discovered are illustrated in Figure 2.  Most of the more exotic knots that we discovered in the critical sets appear to be of high crossing number, making them more difficult to classify. All of this naturally leads to the following

\begin{question}
Besides torus knots, which knot types are realized as centerlines of critical points of $\mathrm{E}_\mathcal{M}$?
 \end{question}

\begin{figure}\label{fig:other_knots}
\centering
\begin{overpic}[width=0.85\textwidth,tics=5]{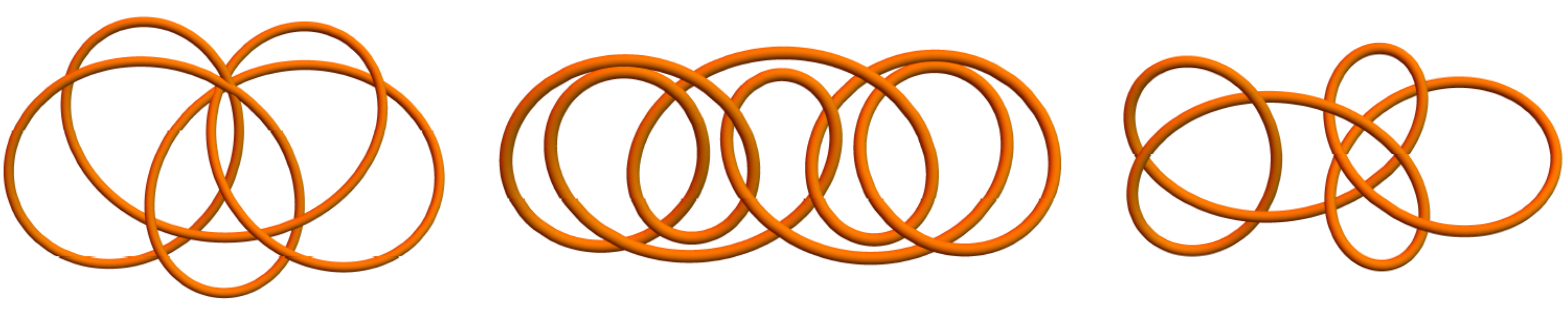}
\end{overpic}
\smallskip
\caption{Examples of non-torus knots which arise as centerlines of critical points of $\mathrm{E}_\mathcal{M}$. The knots displayed are $10_{139}^\ast$, $10_{152}$ and $3_1 \# 3_1$, respectively. Their respective quaternionic parameterizations have approximate parameter values $q(-3, 5, 0.09, 0.996i, 0.15, 0.989i)$, $q(5, 7, 0.986,0.167i,0.1-0.11i,-0.855+0.497i)$ and $q(-3,5,0.16,0.999i,-0.23,0.999i)$.}
\end{figure}

\section*{Acknowledgements}

I would like to thank Jason Cantarella for many illuminating conversations about spaces of curves and elastic energy and Aaron Trautwein for sharing a copy of his thesis with me.

\end{document}